\numberwithin{equation}{section}
\theoremstyle{plain}
\newtheorem{thm}{Theorem}[section]
\newtheorem{lem}[thm]{Lemma}
\newtheorem{prop}[thm]{Proposition}
\newtheorem{cor}[thm]{Corollary}
\newtheorem{letterthm}{Theorem}
\newtheorem{lettercor}[letterthm]{Corollary}
\theoremstyle{definition}
\newtheorem*{defn*}{Definition}
\newtheorem{rem}[thm]{Remark}
\newtheorem*{terminology}{Terminology}
\newtheorem*{shalom-conjecture}{Shalom's conjecture}
\newtheorem*{CRC}{Connes'\! rigidity conjecture}
\newcommand{\N}{\mathbb{N}}
\newcommand{\R}{\mathbb{R}}
\newcommand{\C}{\mathbb{C}}
\newcommand{\Q}{\mathbb{Q}}
\newcommand{\Map}{\operatorname{Map}}
\newcommand{\Tr}{\operatorname{Tr}}
\newcommand{\ovt}{\mathbin{\overline{\otimes}}}
\newcommand{\Aut}{\operatorname{Aut}}
\newcommand{\Ad}{\operatorname{Ad}}
\newcommand{\id}{\operatorname{id}}
\newcommand{\SL}{\operatorname{SL}}
\newcommand{\Prob}{\operatorname{Prob}}
\newcommand{\supp}{\operatorname{supp}}
\newcommand{\rk}{\operatorname{rk}}
\newcommand{\bary}{\operatorname{Bar}}
\newcommand{\dpr}{^{\prime\prime}}
\newcommand{\rB}{\operatorname{B}}
\newcommand{\rE}{\operatorname{ E}}
\newcommand{\rC}{\operatorname{C}}
\newcommand{\rL}{\operatorname{ L}}
\newcommand{\Out}{\operatorname{Out}}
\newcommand{\Inn}{\operatorname{Inn}}
\title[Weyl groups and rigidity of von Neumann algebras]{Weyl groups and rigidity of \\ von Neumann algebras}
\begin{document}

\begin{abstract}
Let $G$ be a noncompact semisimple algebraic group with trivial center, $S < G$ a maximal split torus, $H < G$ the centralizer of $S$ in $G$ and $\Gamma < G$ an irreducible lattice. Consider the group measure space von Neumann algebra $\mathscr M = \rL(\Gamma \curvearrowright G/H)$ associated with the nonsingular action $\Gamma \curvearrowright G/H$ and regard the group von Neumann algebra $M = \rL(\Gamma)$ as a von Neumann subalgebra $M \subset \mathscr M$. We show that the group $\Aut_M(\mathscr M)$ of all unital normal $\ast$-automorphisms of $\mathscr M$ acting identically on $M$ is isomorphic to the Weyl group $\mathscr W_G$ of the semisimple algebraic group $G$. Our main theorem is a noncommutative analogue of a rigidity result of Bader--Furman--Gorodnik--Weiss  for group actions on algebraic homogeneous spaces and moreover gives new insight towards Connes'\! rigidity conjecture for higher rank lattices.
\end{abstract}

\author{Cyril Houdayer}
\address{\'Ecole normale sup\'erieure \\ D\'epartement de math\'ematiques et applications \\ Universit\'e Paris-Saclay \\ 45 rue d'Ulm \\ 75230 Paris Cedex 05 \\ FRANCE}
\email{cyril.houdayer@ens.psl.eu}
\thanks{CH is supported by ERC Advanced Grant NET 101141693.}

\author{Adrian Ioana}
\address{Department of Mathematics \\ University of California at San Diego \\ 9500 Gilman Drive \\ La Jolla \\ CA
92093 \\ USA}
\email{aioana@ucsd.edu}
\thanks{AI is supported by NSF grants DMS-2153805 and DMS-2451697.}

\subjclass[2020]{20G25, 22D25, 37A40, 46L10, 46L55}

\keywords{Algebraic groups; Irreducible lattices; von Neumann algebras; Weyl group}

\maketitle

\section{Introduction and statement of the main results}

In order to state our main results, we use the following terminology regarding algebraic groups. We refer to \cite{Bo91, Ma91} for further details.

\begin{terminology}
Let $k$ be a local field of characteristic zero, that is, $k$ is equal to $\R$, $\C$ or a finite extension of $\Q_p$ for some prime number $p$. Let $\mathbf G$ be a Zariski connected simply connected $k$-isotropic almost $k$-simple algebraic $k$-group. All maximal $k$-split tori of $\mathbf G$ are conjugate over $k$ (i.e.\! by elements of $\mathbf G(k)$). We choose $\mathbf S < \mathbf G$ a maximal $k$-split torus and $\mathbf P < \mathbf G$ a minimal parabolic $k$-subgroup such that $\mathbf H = \mathscr Z_{\mathbf G}(\mathbf S) < \mathbf P$. The dimension of $\mathbf S$ is called the $k$-rank of $\mathbf G$ and is denoted by $\rk_k(\mathbf G)$. Since $\mathbf G$ is $k$-isotropic, we have $\rk_k(\mathbf G) \geq 1$. The centralizing $k$-subgroup $\mathscr Z_{\mathbf G}(\mathbf S)$ is the Zariski connected component of the normalizing $k$-subgroup $\mathscr N_{\mathbf G}(\mathbf S)$. The finite group
$\mathscr W_{\mathbf G} = \mathscr N_{\mathbf G}(\mathbf S)/\mathscr Z_{\mathbf G}(\mathbf S)$ is called the {\em Weyl group} of $\mathbf G$ relative to $k$. Every coset of $\mathscr N_{\mathbf G}(\mathbf S)/\mathscr Z_{\mathbf G}(\mathbf S)$ is represented by an element rational over $k$, that is, $\mathscr N_{\mathbf G}(\mathbf S) = \mathscr N_{\mathbf G}(\mathbf S)(k) \cdot \mathscr Z_{\mathbf G}(\mathbf S)$. In the case when $\mathbf G = \SL_n$ for $n \geq 2$, we may assume that $\mathbf S < \mathbf G$ is the group of diagonal matrices, and we have $\mathbf H = \mathbf S$ and $\mathscr W_{\mathbf G} \cong \mathfrak S_n$.
\end{terminology}

We introduce the following notation which we will be using throughout our paper. Let $d \geq 1$. For every $i \in \{1, \dots, d\}$, let $k_i$ be a local field of characteristic zero, $\mathbf G_i$ a Zariski connected simply connected $k_i$-isotropic almost $k_i$-simple algebraic $k_i$-group as above and choose $\mathbf S_i < \mathbf P_i < \mathbf G_i$ accordingly so that $\mathbf H_i = \mathscr Z_{\mathbf G_i}(\mathbf S_i) < \mathbf P_i$. Set $G = \prod_{i = 1}^d \mathbf G_i(k_i)$, $S = \prod_{i = 1}^d \mathbf S_i(k_i)$, $H = \prod_{i = 1}^d \mathbf H_i(k_i)$, $P = \prod_{i = 1}^d \mathbf P_i(k_i)$ and $\mathscr W_G = \prod_{i = 1}^d \mathscr W_{\mathbf G_i}$. Endow the homogeneous space $G/H$ (resp.\! $G/P$) with its unique $G$-invariant measure class. Since $G$ and $H$ are both unimodular, the homogeneous space $G/H$ carries a $G$-invariant $\sigma$-finite infinite measure $m$ whose measure class coincides with the unique $G$-invariant measure class on $G/H$. It is well known that we have a natural isomorphism of nonsingular $G$-spaces $G/H \cong G/P \times G/P$, where the composition with the projection onto the first factor corresponds to the factor map $G/H \to G/P$ (see e.g.\! the discussion preceding \cite[Theorem 3]{BF11}). Let $\Gamma < G$ be an irreducible lattice and set $\Lambda = \Gamma/\mathscr Z(\Gamma)$. Since $\mathscr Z(\Gamma) < H \cap \mathscr Z(G)$, we have $\mathscr Z(\Gamma) < \ker(\Gamma \curvearrowright G/H)$ and so we may consider the well-defined nonsingular action $\Lambda \curvearrowright G/H$. Using the terminology on algebraic groups as above and since $H < G$ is noncompact and $\Gamma < G$ is irreducible, Moore's ergodicity theorem implies that $\Lambda \curvearrowright G/H$ is ergodic (see \cite{HM77}). Moreover, Lemma \ref{lem-free} below implies that $\Lambda \curvearrowright G/H$ is essentially free.

As a consequence of the main results of \cite{BFGW12} (see Theorem \ref{thm-equivariant} below for further details), any $\Lambda$-equivariant nonsingular automorphism of $G/H$ is automatically $G$-equivariant and so we have the following isomorphisms
$$\Aut_\Lambda(G/H) = \Aut_\Gamma(G/H) = \Aut_G(G/H) \cong \mathscr W_G.$$
The aim of our paper is to prove a noncommutative analogue of the above isomorphism result. Before stating our main theorem, we need to introduce some further notation.

We denote by $\sigma : \Lambda \curvearrowright \rL^\infty(G/H)$ the von Neumann algebraic action corresponding to the nonsingular action $\Lambda \curvearrowright G/H$. Denote by $M = \rL(\Lambda) = \left\{u_\gamma \mid \gamma \in \Lambda \right\}\dpr$ the group von Neumann algebra. Denote by $\mathscr M = \rL(\Lambda \curvearrowright G/H)$ the group measure space von Neumann algebra associated with the nonsingular action $\Lambda \curvearrowright G/H$ together with its Cartan subalgebra $\mathscr A = \rL^\infty(G/H)$ and regard $M \subset \mathscr M$ as a von Neumann subalgebra. For every $w = (w_i)_i \in \mathscr W_G$, choose $n = (n_i)_i \in \prod_{i = 1}^d \mathscr N_{\mathbf G_i}(\mathbf S_i)(k_i)$ so that $w = (n_i\mathbf H_i)_i \in \mathscr W_G$ and define $\theta_w \in \Aut(\mathscr A)$ as the unique unital normal $\ast$-automorphism that satisfies $\theta_w(F)(gH) = F(gHn^{-1})$ for every $F \in \mathscr A$ and almost every $gH \in G/H$. Then we have $\theta_w \circ \sigma_\gamma = \sigma_\gamma \circ \theta_w$ for every $\gamma \in \Lambda$ and every $w \in \mathscr W_G$.

Denote by $\Aut_M(\mathscr M)$ the group of all unital normal $\ast$-automorphisms $\Theta \in \Aut(\mathscr M)$ such that $\Theta|_M = \id_M$. For every $w \in \mathscr W_G$, define $\Theta_w \in \Aut_M(\mathscr M)$ as the unique unital normal $\ast$-automorphism that satisfies $\Theta_w|_M = \id_M$ and $\Theta_w|_{\mathscr A} = \theta_w$. Then $\rho : \mathscr W_G \to \Aut_M(\mathscr M) : w \mapsto \Theta_w$ is an injective group homomorphism.

Our main result is the following novel rigidity phenomenon for the inclusion of von Neumann algebras $M = \rL(\Lambda) \subset \rL(\Lambda \curvearrowright G/H) = \mathscr M$ associated with the nonsingular action $\Lambda \curvearrowright G/H$.

\begin{letterthm}\label{main-theorem-weyl}
The group homomorphism $\rho : \mathscr W_G \to \Aut_M(\mathscr M)$ is an isomorphism. In particular, we have 
$$\mathscr W_G \cong \Aut_M(\mathscr M).$$
\end{letterthm}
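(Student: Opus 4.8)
The plan is to prove that $\rho$ is surjective; injectivity is already established. So fix $\Theta \in \Aut_M(\mathscr M)$ and produce $w \in \mathscr W_G$ with $\Theta = \Theta_w$. The key strategic point is that $\Theta$ restricts to the identity on $M = \rL(\Lambda)$, hence the image $\Theta(\mathscr A)$ is a Cartan subalgebra of $\mathscr M$ that is \emph{normalized by} the unitaries $u_\gamma$, $\gamma \in \Lambda$, in the sense that $u_\gamma \Theta(\mathscr A) u_\gamma^* = \Theta(\sigma_\gamma(\mathscr A)) = \Theta(\mathscr A)$. Thus both $\mathscr A$ and $\Theta(\mathscr A)$ are $\Lambda$-invariant Cartan subalgebras of $\mathscr M$ containing... well, one should compare them via a conjugacy/intertwining argument. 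The natural first step is to show that $\Theta(\mathscr A)$ and $\mathscr A$ are unitarily conjugate inside $\mathscr M$ by a unitary that can be taken in $M' \cap \mathscr M$ or that intertwines the two $\Lambda$-actions; the relevant tool is the uniqueness (up to conjugacy) of the group-measure-space Cartan subalgebra together with the structure of the normalizing groupoid, which for these BFGW-type actions should be rigid. Concretely, I would first reduce to the case where $\Theta(\mathscr A) = \mathscr A$ as subalgebras of $\mathscr M$, by absorbing an inner perturbation.

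Once $\Theta(\mathscr A) = \mathscr A$, the automorphism $\Theta$ induces an automorphism $\theta := \Theta|_{\mathscr A} \in \Aut(\mathscr A) = \Aut(\rL^\infty(G/H))$, i.e. a nonsingular automorphism of the measure space $G/H$. Because $\Theta u_\gamma = u_\gamma \Theta$ and $\Theta$ fixes $M$, one checks $\theta \circ \sigma_\gamma = \sigma_\gamma \circ \theta$ for all $\gamma \in \Lambda$, so $\theta$ defines an element of $\Aut_\Lambda(G/H)$. Now invoke the BFGW rigidity result quoted in the excerpt (Theorem \ref{thm-equivariant}): $\Aut_\Lambda(G/H) = \Aut_G(G/H) \cong \mathscr W_G$. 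Hence $\theta = \theta_w$ for a unique $w \in \mathscr W_G$. It then remains to show that $\Theta$ is completely determined by its restriction to $\mathscr A$ — but this is exactly because $\mathscr A$ is a Cartan subalgebra and $\Theta$ also fixes $M$: the pair $(\mathscr A, M)$ generates $\mathscr M$, so $\Theta$ is determined by $\theta_w$ and $\id_M$, giving $\Theta = \Theta_w$. This closes the argument modulo the reduction step.

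I expect the main obstacle to be precisely that reduction, namely showing $\Theta(\mathscr A)$ is unitarily conjugate to $\mathscr A$ \emph{by a unitary normalizing the $\Lambda$-action appropriately} — equivalently, that $\Theta$ is inner-by-$\mathscr A$-automorphism up to the Weyl action. The difficulty is that a priori $\Theta(\mathscr A)$ is just \emph{some} Cartan subalgebra of $\mathscr M$ fixed pointwise under conjugation by... no: it is globally invariant under $\Ad(u_\gamma)$, but not pointwise. One needs a uniqueness-of-Cartan statement adapted to the presence of $M$. The relevant machinery is Popa's intertwining-by-bimodules ($\prec$) and the spectral-gap / rigidity properties of higher-rank lattices (property (T) of $\Gamma$, or more precisely the cocycle/measure-equivalence superrigidity feeding into BFGW): one shows $\mathscr A \prec_{\mathscr M} \Theta(\mathscr A)$ and conversely, forcing unitary conjugacy, and then tracks what the conjugating unitary does to $M$. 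Controlling that unitary — showing it may be chosen in $M' \cap \mathscr M$ or modified by an element of $\mathscr A$ so as not to disturb $\id_M$ — is the technical heart; the group-theoretic input (Weyl group, BFGW) is then plugged in mechanically at the commutative level as in the second paragraph.
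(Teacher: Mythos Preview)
Your outline after the reduction $\Theta(\mathscr A)=\mathscr A$ is correct, and you have rightly identified that reduction as the entire content of the theorem. But the tools you propose for it do not work. The hope of choosing the conjugating unitary in $M'\cap\mathscr M$ is a dead end: by Proposition~\ref{prop-commutant} one has $M'\cap\mathscr M=\C 1$, so any such unitary is a scalar. A general uniqueness-of-Cartan statement for $\mathscr M=\rL(\Lambda\curvearrowright G/H)$ is not available, and Popa intertwining $\mathscr A\prec_{\mathscr M}\Theta(\mathscr A)$ still needs some input (a deformation, spectral gap relative to a specific subalgebra, etc.) to get off the ground; you have not supplied one. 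Invoking property~(T) or higher rank is also a red herring: the theorem carries no rank hypothesis and the proof uses none.

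The missing idea is that BFGW rigidity (Theorem~\ref{thm-equivariant}) controls not only $\Lambda$-equivariant \emph{automorphisms} of $G/H$ but, via its $\Prob$-valued form, all $\Lambda$-equivariant normal \emph{ucp maps} $\mathscr A\to\mathscr A$: every such map is a convex combination $\sum_{w}\alpha_w\theta_w$ (Theorem~\ref{thm-rigidity-ucp}). The paper applies this to $\Phi=\rE\circ\Theta|_{\mathscr A}$, normalizes so that $\alpha_e>0$, and then runs a direct convexity argument---taking the minimal-$\|\cdot\|_2$ element in the closed convex hull of the tuples $(\Theta(u)pu^*,(\theta_w(u)pu^*)_{w\neq e})$ over $u\in\mathscr U(\mathscr A)$---to produce a nonzero partial isometry $v$ with $\Theta(a)v=va$; the coefficient $\alpha_e>0$ is exactly what prevents the limit from collapsing, and this is where the ucp-level rigidity does the work that generic intertwining cannot. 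Ergodicity of $\Lambda\curvearrowright G/H$ lets one patch the translates $\sigma_\gamma(v)$ into a unitary $u$ with $\Theta(\mathscr A)=u\mathscr A u^*$. Finally---a step your outline omits---one must still show $u\in\mathscr U(\mathscr A)$: expanding $\rE(uau^*)=\Phi(a)$ in Fourier coefficients and comparing with $\sum_w\alpha_w\theta_w(a)$, essential freeness of the \emph{joint} action $\Lambda\times\mathscr W_G\curvearrowright G/H$ (Lemma~\ref{lem-free}) forces $a_\gamma=0$ for $\gamma\neq e$ and $\alpha_w=0$ for $w\neq e$, whence $u\in\mathscr A$ and $\Theta=\id_{\mathscr M}$.
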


Since the inclusion $M \subset \mathscr M$ is irreducible (see Corollary \ref{cor-commutant} below), we have $\Inn_M(\mathscr M) = \left \{ \Ad(u) \mid u \in \mathscr U(M' \cap \mathscr M) \right \} = \{\id_{\mathscr M}\}$. This further implies that $\mathscr W_G \cong \Aut_M(\mathscr M) = \Out_M(\mathscr M)$. The striking feature of Theorem \ref{main-theorem-weyl} is that even though the inclusion $M = \rL(\Lambda) \subset \rL(\Lambda \curvearrowright G/H) = \mathscr M$ tends to forget the nonsingular action $\Lambda \curvearrowright G/H$, it still retains the Weyl group $\mathscr W_G$ via the group isomorphism $\mathscr W_G \cong \Aut_M(\mathscr M)$. 

This result is related to Boutonnet--Houdayer's noncommutative factor theorem \cite{BH22} (see also \cite{Ho21}). In that respect, set $\mathscr B = \rL(\Lambda \curvearrowright G/P)$ and naturally regard $M \subset \mathscr B \subset \mathscr M$. Using the same setting as above and under the higher rank assumption $\sum_{i = 1}^d \rk_{k_i}(\mathbf G_i) \geq 2$, it was shown in \cite[Theorem B]{BH22} that intermediate von Neumann subalgebras $M \subset \mathscr N \subset \mathscr B$ are in one-to-one correspondence with intermediate parabolic subgroups $P < Q < G$ via the identification $\mathscr N = \rL(\Lambda \curvearrowright G/Q)$. In particular, the inclusion $M \subset \mathscr B$ retains the abstract poset of all intermediate parabolic subgroups and the rank $\sum_{i = 1}^d \rk_{k_i}(\mathbf G_i)$. However, it is open whether the inclusion $M \subset \mathscr B$ retains the Weyl group $\mathscr W_G$.

Theorem \ref{main-theorem-weyl} above implies that the larger inclusion $M \subset \mathscr M$ retains the Weyl group $\mathscr W_G$ via the group isomorphism $\Aut_M(\mathscr M) \cong \mathscr W_G$ and therefore retains the abstract poset of intermediate parabolic subgroups $P < Q < G$. However, Theorem \ref{main-theorem-weyl} and \cite[Theorem B]{BH22} are of a different nature, their proofs do not rely on the same tools and neither theorem implies the other. We emphasize that there is no higher rank assumption in Theorem \ref{main-theorem-weyl} while the higher rank assumption is essential in the proof of \cite[Theorem B]{BH22} because one uses the noncommutative Nevo--Zimmer theorem \cite{BH19, BBH21} in the simple case ($d = 1$) and the dichotomy result for equivariant ucp maps \cite{BBHP20} in the product case ($d \geq 2$).

The present work is also motivated by Connes'\! rigidity conjecture for higher rank lattices in semisimple Lie groups.

\begin{CRC}
For every $j \in \{1, 2\}$, let $G_j$ be a semisimple connected real Lie group with trivial center and no compact factors such that $\rk_{\R}(G_j) \geq 2$ and let $\Gamma_j < G_j$ be an irreducible lattice. If $\rL(\Gamma_1) \cong \rL(\Gamma_2)$, then $G_1 \cong G_2$.
\end{CRC}

For every $j \in \{1, 2\}$, let $d_j \geq 1$ and for every $i \in \{1, \dots, d_j\}$, let $k_{i, j}$ be a local field of characteristic zero, $\mathbf G_{i, j}$ a Zariski connected simply connected $k_{i, j}$-isotropic almost $k_{i, j}$-simple algebraic $k_{i, j}$-group and $\mathbf S_{i, j} < \mathbf G_{i, j}$ a maximal $k_{i, j}$-split torus. Set $G_j = \prod_{i = 1}^{d_j} \mathbf G_{i, j}(k_{i, j})$, $H_j = \prod_{i = 1}^{d_j} \mathscr Z_{\mathbf G_{i, j}}(\mathbf S_{i, j})(k_{i, j})$ and $\mathscr W_{G_j} = \prod_{i = 1}^{d_j} \mathscr W_{\mathbf G_{i, j}}$. Let $\Gamma_j < G_j$ be an irreducible lattice and set $\Lambda_j = \Gamma_j/\mathscr Z(\Gamma_j)$. Following Theorem \ref{main-theorem-weyl}, denote by $\rho_j : \mathscr W_{G_j} \to \Aut_{\rL(\Lambda_j)}(\rL(\Lambda_j \curvearrowright G_j/H_j))$ the corresponding group isomorphism. We derive the following consequence of Theorem \ref{main-theorem-weyl} which gives new insight towards Connes'\! rigidity conjecture.

\begin{lettercor}\label{main-cor-connes}
Let $\Psi : \rL(\Lambda_1 \curvearrowright G_1/H_1) \to \rL(\Lambda_2 \curvearrowright G_2/H_2)$ be a surjective unital normal $\ast$-isomorphism such that $\Psi(\rL(\Lambda_1)) = \rL(\Lambda_2)$. Then the mapping $\mathscr W_{G_1} \to \mathscr W_{G_2} : w \mapsto \rho_2^{-1}(\Psi \circ \rho_1(w) \circ \Psi^{-1})$ is a group isomorphism.
\end{lettercor}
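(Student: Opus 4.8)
The plan is to deduce this directly from Theorem \ref{main-theorem-weyl} by transporting the automorphism groups across $\Psi$. Write $\mathscr M_j = \rL(\Lambda_j \curvearrowright G_j/H_j)$ and $M_j = \rL(\Lambda_j) \subset \mathscr M_j$, so that the hypotheses say $\Psi : \mathscr M_1 \to \mathscr M_2$ is a surjective unital normal $\ast$-isomorphism with $\Psi(M_1) = M_2$. The asserted map is then $\rho_2^{-1} \circ \beta \circ \rho_1$, where $\beta : \Theta \mapsto \Psi \circ \Theta \circ \Psi^{-1}$, and the whole point is to check that $\beta$ is a well-defined group isomorphism from $\Aut_{M_1}(\mathscr M_1)$ onto $\Aut_{M_2}(\mathscr M_2)$.

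First I would verify that $\beta$ maps $\Aut_{M_1}(\mathscr M_1)$ into $\Aut_{M_2}(\mathscr M_2)$. If $\Theta \in \Aut_{M_1}(\mathscr M_1)$, then $\Psi \circ \Theta \circ \Psi^{-1}$ is certainly a unital normal $\ast$-automorphism of $\mathscr M_2$; and for every $x \in M_2$ we have $\Psi^{-1}(x) \in M_1$ because $\Psi(M_1) = M_2$, hence $\Theta(\Psi^{-1}(x)) = \Psi^{-1}(x)$ and therefore $(\Psi \circ \Theta \circ \Psi^{-1})(x) = x$. Thus $\beta(\Theta) \in \Aut_{M_2}(\mathscr M_2)$. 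Applying the same reasoning to $\Psi^{-1}$, which satisfies $\Psi^{-1}(M_2) = M_1$, shows that $\Theta' \mapsto \Psi^{-1} \circ \Theta' \circ \Psi$ maps $\Aut_{M_2}(\mathscr M_2)$ into $\Aut_{M_1}(\mathscr M_1)$ and is a two-sided inverse for $\beta$. Since $\beta$ is visibly multiplicative, it is a group isomorphism $\beta : \Aut_{M_1}(\mathscr M_1) \to \Aut_{M_2}(\mathscr M_2)$.

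Finally, Theorem \ref{main-theorem-weyl} provides the group isomorphisms $\rho_1 : \mathscr W_{G_1} \to \Aut_{M_1}(\mathscr M_1)$ and $\rho_2 : \mathscr W_{G_2} \to \Aut_{M_2}(\mathscr M_2)$, so the composition
$$\rho_2^{-1} \circ \beta \circ \rho_1 : \mathscr W_{G_1} \to \mathscr W_{G_2}, \qquad w \longmapsto \rho_2^{-1}\bigl(\Psi \circ \rho_1(w) \circ \Psi^{-1}\bigr),$$
is a group isomorphism, which is exactly the claim. There is essentially no obstacle here: the corollary is a purely formal consequence of Theorem \ref{main-theorem-weyl}, the only thing to check being that conjugation by a $\ast$-isomorphism carrying $\rL(\Lambda_1)$ onto $\rL(\Lambda_2)$ preserves the property of acting identically on the group von Neumann subalgebra, and all the mathematical content sits in Theorem \ref{main-theorem-weyl} itself.
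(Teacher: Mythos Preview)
Your proof is correct and follows exactly the paper's approach: conjugation by $\Psi$ gives a group isomorphism $\Aut_{M_1}(\mathscr M_1) \to \Aut_{M_2}(\mathscr M_2)$, and composing with the isomorphisms $\rho_1$, $\rho_2^{-1}$ from Theorem~\ref{main-theorem-weyl} yields the result. The paper's version is simply terser, omitting the routine verification that $\beta(\Theta)$ fixes $M_2$ pointwise.
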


Let us say a few words about the proof of Theorem \ref{main-theorem-weyl}. Let $\Theta \in \Aut_M(\mathscr M)$. In order to prove that there exists $w \in \mathscr W_G$ such that $\Theta = \Theta_w$, it is enough to show that $\Theta(\mathscr A) = \mathscr A$, where $\mathscr A = \rL^\infty(G/H)$. To do this, we proceed in three steps. Firstly, we consider the equivariant normal ucp map $\Phi = \rE \circ \Theta|_{\mathscr A} : \mathscr A \to \mathscr A$, where $\rE : \mathscr M \to \mathscr A$ is the canonical faithful normal conditional expectation. By interpreting the main result of \cite{BFGW12} as a rigidity statement for equivariant normal ucp maps, we conclude that $\Phi = \sum_{w \in \mathscr W_G} \alpha_w \theta_w$ is a convex combination of
elements of the finite set $\left \{ \theta_w \mid w \in \mathscr W_G\right \}$. Secondly, we develop a convexity/maximality argument to show that there exist $w \in \mathscr W_G$ and $u \in \mathscr U(\mathscr M)$ such that $\Theta = \Ad(u) \circ \Theta_w$ on $\mathscr A$. This second step is purely noncommutative and is reminiscent of the intertwining method in Popa's deformation/rigidity theory \cite{Po06}. Thirdly, by exploiting the essential freeness of the nonsingular action $\Lambda \times \mathscr W_G \curvearrowright G/H$, we conclude that $u \in \mathscr U(\mathscr A)$, which finally implies that $\Theta = \Theta_w$.

\subsection*{Acknowledgments} CH is grateful to Amine Marrakchi for insightful conversations that led to Proposition \ref{prop-commutant} and to Fran\c{c}ois Thilmany for thought-provoking discussions on the structure and classification of simple algebraic groups. The authors also thank Uri Bader and Itamar Vigdorovich for their valuable comments.

\subsection*{Special acknowledgments} Part of this work was done while CH was facing a severe illness and undergoing aggressive treatment. He wishes to express his gratitude to his family, friends and colleagues for their constant support, and above all to his wife, for her love and strength through this challenging time. He is also deeply grateful to the medical staff at Institut Curie, who treated him with such care and attention. He owes them his full recovery.

\section{Preliminaries}

We keep the same notation as in the introduction. For any group action $L \curvearrowright Z$, we denote by $Z^L$ the set of all $L$-fixed points in $Z$. For any subgroup $L < R$, the quotient group $\mathscr N_{R}(L)/L$ is naturally identified with the group $\Aut_R(R/L)$ of all $R$-equivariant bijections of the set $R/L$ via the onto isomorphism
$$\mathscr N_{R}(L)/L \to \Aut_R(R/L) : n L \mapsto \left( gL \mapsto g Ln^{-1} \right).$$

\subsection{The Weyl group}

In this subsection, we fix a local field $k$ of characteristic zero, $\mathbf G$ a Zariski connected simply connected $k$-isotropic almost $k$-simple algebraic $k$-group and $\mathbf S < \mathbf G$ a maximal $k$-split torus. Set $\mathbf H = \mathscr Z_{\mathbf G}(\mathbf S)$. The Weyl group of $\mathbf G$ relative to $k$ is defined by the formula
$$\mathscr W_{\mathbf G} = \mathscr N_{\mathbf G}(\mathbf S)/\mathscr Z_{\mathbf G}(\mathbf S).$$ 

\begin{prop}\label{prop-weyl}
Keep the same notation as above. The following assertions hold:
\begin{itemize}
\item [$(\rm i)$] We have $\mathscr N_{\mathbf G}(\mathbf S)(k) = \mathscr N_{\mathbf G}(\mathbf H)(k)$.
\item [$(\rm ii)$] We have the natural identifications 
$$\mathscr W_{\mathbf G} \cong \mathscr N_{\mathbf G(k)}(\mathbf S(k))/\mathbf H(k) = \mathscr N_{\mathbf G(k)}(\mathbf H(k))/\mathbf H(k).$$
In particular, $\mathscr W_{\mathbf G} \cong \Aut_{\mathbf G(k)}(\mathbf G(k)/\mathbf H(k))$ coincides with the group of all nonsingular $\mathbf G(k)$-equivariant automorphisms of $\mathbf G(k)/\mathbf H(k)$.
\item [$(\rm iii)$] We have $(\mathbf G(k)/\mathbf H(k))^{\mathbf H(k)} = \mathscr N_{\mathbf G(k)}(\mathbf H(k))/\mathbf H(k) \cong \mathscr W_{\mathbf G}$.
\end{itemize}
\end{prop}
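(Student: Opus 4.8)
The plan is to deduce all three assertions from standard structure theory of reductive groups over local fields, using the fact (recorded in the Terminology) that every Weyl coset is represented by a $k$-rational element.

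For assertion $(\rm i)$, I would first prove the inclusion $\mathscr N_{\mathbf G}(\mathbf S) \subset \mathscr N_{\mathbf G}(\mathbf H)$, which is immediate: if $n$ normalizes $\mathbf S$, then $n$ normalizes $\mathscr Z_{\mathbf G}(\mathbf S) = \mathbf H$ since conjugation by $n$ permutes the centralizers it fixes. For the reverse inclusion on $k$-points, the key point is that $\mathbf S$ is the maximal $k$-split torus in the center of $\mathbf H$ — more precisely, $\mathbf S$ is the connected center of $\mathbf H = \mathscr Z_{\mathbf G}(\mathbf S)$ intersected with the maximal split part, and in any case $\mathbf S$ is canonically determined by $\mathbf H$ as a characteristic subgroup (it is the maximal $k$-split subtorus of the radical of $\mathbf H$). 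Hence any $k$-rational element normalizing $\mathbf H$ also normalizes this characteristically defined $\mathbf S$, giving $\mathscr N_{\mathbf G}(\mathbf H)(k) \subset \mathscr N_{\mathbf G}(\mathbf S)(k)$. Together with the trivial inclusion this yields $(\rm i)$.

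For assertion $(\rm ii)$, I would combine $(\rm i)$ with the $k$-rationality statement from the Terminology, namely $\mathscr N_{\mathbf G}(\mathbf S) = \mathscr N_{\mathbf G}(\mathbf S)(k) \cdot \mathscr Z_{\mathbf G}(\mathbf S)$. This gives a natural map $\mathscr N_{\mathbf G(k)}(\mathbf S(k))/\mathbf H(k) \to \mathscr W_{\mathbf G} = \mathscr N_{\mathbf G}(\mathbf S)/\mathbf H$, which is surjective by the $k$-rationality of representatives and injective because $\mathscr N_{\mathbf G}(\mathbf S)(k) \cap \mathscr Z_{\mathbf G}(\mathbf S) = \mathscr Z_{\mathbf G}(\mathbf S)(k) = \mathbf H(k)$. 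The second equality $\mathscr N_{\mathbf G(k)}(\mathbf S(k))/\mathbf H(k) = \mathscr N_{\mathbf G(k)}(\mathbf H(k))/\mathbf H(k)$ is then just the $k$-point version of $(\rm i)$ (note $\mathscr N_{\mathbf G}(\mathbf H)(k) = \mathscr N_{\mathbf G(k)}(\mathbf H(k))$ since normalizing $\mathbf H$ is a Zariski-closed condition and $\mathbf H$ is $k$-defined, so this is standard). Finally, the identification with $\Aut_{\mathbf G(k)}(\mathbf G(k)/\mathbf H(k))$ is the abstract group-theoretic fact recalled at the start of the Preliminaries section applied to $L = \mathbf H(k) < R = \mathbf G(k)$.

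For assertion $(\rm iii)$, I would observe that $gH$ is fixed by the left $\mathbf H(k)$-action precisely when $\mathbf H(k) g \mathbf H(k) = g \mathbf H(k)$, i.e. $g^{-1}\mathbf H(k) g \subset \mathbf H(k)$; since $\mathbf H(k)$ is its own normalizer up to the Zariski-density considerations above (or directly, since $g^{-1}\mathbf H g$ and $\mathbf H$ are then algebraic subgroups with $g^{-1}\mathbf H g \subset \mathbf H$ forcing equality by dimension, $\mathbf H$ being connected), this is equivalent to $g \in \mathscr N_{\mathbf G(k)}(\mathbf H(k))$. Thus $(\mathbf G(k)/\mathbf H(k))^{\mathbf H(k)} = \mathscr N_{\mathbf G(k)}(\mathbf H(k))/\mathbf H(k)$, which is $\cong \mathscr W_{\mathbf G}$ by $(\rm ii)$. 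I expect the main obstacle to be the careful justification in $(\rm i)$ that $\mathbf S$ is recovered from $\mathbf H$ as a characteristic subgroup over $k$ — this is where one genuinely uses that $\mathbf H$ is the centralizer of a \emph{maximal} split torus rather than an arbitrary torus, and one must be slightly careful about anisotropic factors of the derived group of $\mathbf H$; everything else is formal manipulation of the $k$-rationality of Weyl representatives and the abstract normalizer/fixed-point dictionary.
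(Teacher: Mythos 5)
Your proposal is correct, and for parts $(\rm ii)$ and $(\rm iii)$ it follows essentially the same route as the paper: surjectivity onto $\mathscr W_{\mathbf G}$ from the $k$-rationality of Weyl representatives, injectivity from $\mathscr N_{\mathbf G}(\mathbf S)(k) \cap \mathscr Z_{\mathbf G}(\mathbf S) = \mathbf H(k)$, the abstract normalizer/automorphism dictionary for the ``moreover'' part, and for $(\rm iii)$ the same Zariski-density argument, where your ``connected subgroup of equal dimension'' step is a perfectly good substitute for the paper's descending chain condition. The genuine difference is in $(\rm i)$: you recover $\mathbf S$ from $\mathbf H$ as the maximal $k$-split subtorus of the radical (i.e.\ the connected center) of the reductive group $\mathbf H = \mathscr Z_{\mathbf G}(\mathbf S)$, a classical Borel--Tits fact, and then observe that a $k$-automorphism of $\mathbf H$ (such as conjugation by an element of $\mathscr N_{\mathbf G}(\mathbf H)(k)$) preserves this canonically defined subtorus. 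The paper instead identifies $\mathbf S$ as the $k$-discompact radical of $\mathbf H$ in the sense of \cite[Proposition 1.4]{Sh97}, using the compactness of $\mathbf H(k)/\mathbf S(k)$ from \cite[Proposition I.2.3.6]{Ma91}; that version trades the structure theory of reductive groups (reductivity of $\mathbf H$, uniqueness of the maximal split subtorus of its center) for a purely ``no compact quotients'' characterization, while yours stays entirely within classical Borel--Tits theory. Both are valid. Two points you leave implicit and should spell out: in $(\rm ii)$ you also need Zariski density of $\mathbf S(k)$ in $\mathbf S$ (not only of $\mathbf H(k)$ in $\mathbf H$) so that elements of $\mathscr N_{\mathbf G(k)}(\mathbf S(k))$ actually normalize $\mathbf S$ and your map to $\mathscr W_{\mathbf G}$ is defined; this holds by \cite[Proposition I.2.5.3(ii)]{Ma91} since $\operatorname{char} k = 0$ and $\mathbf S$ is connected. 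And the maximality claim in $(\rm i)$ deserves one line: any $k$-split subtorus $\mathbf T$ of the center of $\mathbf H$ commutes with $\mathbf S$, so $\mathbf S \mathbf T$ is a $k$-split torus of $\mathbf G$ containing $\mathbf S$, whence $\mathbf T \subset \mathbf S$ by maximality of $\mathbf S$; your worry about anisotropic factors of the derived group is harmless precisely because you take the split part of the radical rather than of $\mathbf H$ itself.
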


\begin{proof} By \cite[Proposition 1.4]{Sh97}, every algebraic $k$-group $\mathbf L$ admits a maximal (with respect to inclusion) $k$-subgroup $\mathbf N$, called the {\it $k$-discompact radical} of $\mathbf L$, which has no nontrivial $k$-compact quotients. Moreover, $\mathbf N<\mathbf L$ is a characteristic (hence, normal) subgroup and the quotient $\mathbf L/\mathbf N$ is $k$-compact.

We claim that $\mathbf S$ is the $k$-discompact radical of $\mathbf H$. Indeed, denote by $\mathbf R$ the $k$-discompact radical of $\mathbf H$. Since $\mathbf S$ has no nontrivial $k$-compact quotient, it follows that $\mathbf S < \mathbf R$. Since $\mathbf S$ is central in $\mathbf H$, we have that $\mathbf R/\mathbf S$ is a $k$-subgroup of $\mathbf H/\mathbf S$. Since the characteristic of $k$ is zero, the quotient group $\mathbf H(k)/\mathbf S(k)$ has finite index in $(\mathbf H/\mathbf S)(k)$. Since $\mathbf H(k)/\mathbf S(k)$ is compact by \cite[Proposition I.2.3.6]{Ma91}, it follows that $(\mathbf H/\mathbf S)(k)$ is compact. Then $(\mathbf R/\mathbf S)(k)$ is compact and since $\mathbf R$ is the $k$-discompact radical of $\mathbf H$, we necessarily have $\mathbf S = \mathbf R$.

$(\rm i)$ It is plain to see that $\mathscr N_{\mathbf G}(\mathbf S)(k) < \mathscr N_{\mathbf G}(\mathbf H)(k)$. Conversely, let $n \in \mathscr N_{\mathbf G}(\mathbf H)(k)$. Then $\iota_n : \mathbf H \to \mathbf H : h \mapsto n h n^{-1}$ is a $k$-automorphism. Since the $k$-discompact radical $\mathbf S$ is a characteristic subgroup of $\mathbf H$, it follows that $n \mathbf S n^{-1} = \mathbf S$ and so $n \in \mathscr N_{\mathbf G}(\mathbf S)(k)$.

$(\rm ii)$ Since $\mathbf S$ is a Zariski connected $k$-group, $\mathbf S(k)$ is Zariski dense in $\mathbf S$ by \cite[Proposition I.2.5.3(ii)]{Ma91}. This implies that $\mathscr N_{\mathbf G(k)}(\mathbf S(k)) =  \mathscr N_{\mathbf G}(\mathbf S)(k)$. Likewise, since $\mathbf H$ is a Zariski connected $k$-group, we have $\mathscr N_{\mathbf G(k)}(\mathbf H(k)) =  \mathscr N_{\mathbf G}(\mathbf H)(k)$. Using $(\rm i)$, we obtain $\mathscr N_{\mathbf G(k)}(\mathbf S(k)) = \mathscr N_{\mathbf G(k)}(\mathbf H(k))$ and so 
$$\mathscr N_{\mathbf G(k)}(\mathbf S(k))/\mathbf H(k) = \mathscr N_{\mathbf G(k)}(\mathbf H(k))/\mathbf H(k).$$
We know that the image of $\mathscr N_{\mathbf G}(\mathbf S)(k)$ under the epimorphism $\mathscr N_{\mathbf G}(\mathbf S) \to \mathscr W_{\mathbf G}$ is all of $\mathscr W_{\mathbf G}$. This further implies that $\mathscr N_{\mathbf G}(\mathbf S)(k)/\mathbf H(k)$ coincides with the $\mathscr N_{\mathbf G}(\mathbf S)(k)$-orbit of $\mathbf H$ in $\mathscr N_{\mathbf G}(\mathbf S)/\mathbf H$. Therefore, we have the natural identifications 
$$\mathscr W_{\mathbf G} \cong \mathscr N_{\mathbf G(k)}(\mathbf S(k))/\mathbf H(k) = \mathscr N_{\mathbf G(k)}(\mathbf H(k))/\mathbf H(k).$$
The moreover part follows from the observation made at the beginning of the section.

$(\rm iii)$ It is plain to see that $\mathscr N_{\mathbf G(k)}(\mathbf H(k))/\mathbf H(k) \subset (\mathbf G(k)/\mathbf H(k))^{\mathbf H(k)}$. Conversely, let $g \mathbf H(k) \in (\mathbf G(k)/\mathbf H(k))^{\mathbf H(k)}$. Then we have $g^{-1} \mathbf H(k) g < \mathbf H(k)$. Since $\mathbf H$ is a Zariski connected $k$-group, $\mathbf H(k)$ is Zariski dense in $\mathbf H$ by \cite[Proposition I.2.5.3(ii)]{Ma91} and so $g^{-1} \mathbf H g < \mathbf H$. This implies that the sequence $(g^{-n} \mathbf H g^n)_{n \geq 1}$ is a descending chain of algebraic subgroups of $\mathbf G$. By the descending chain condition, this sequence has finite length and so there exists $n \geq 1$ such that $g^{-n} \mathbf H g^n = g^{-(n+1)} \mathbf H g^{(n+ 1)}$. This further implies that $g^{-1} \mathbf H g = \mathbf H$ and so $g^{-1} \mathbf H(k) g = \mathbf H(k)$. Thus, we have $g \mathbf H(k) \in \mathscr N_{\mathbf G(k)}(\mathbf H(k))/\mathbf H(k)$.
\end{proof}

\subsection{Algebraic groups and homogeneous spaces}

We now come back to the notation we introduced in the introduction. Firstly, we review the work of \cite{BFGW12} regarding the rigidity of group actions on algebraic homogeneous spaces. For every $i \in \{1, \dots, d\}$, let $\mathbf L_i < \mathbf G_i$ be a $k_i$-subgroup and $\mathbf V_i$ a $k_i$-$\mathbf G_i$-algebraic variety. Set $L =  \prod_{i = 1}^d \mathbf L_i(k_i)$ and $V = \prod_{i = 1}^d \mathbf V_i(k_i)$. Define $\Prob^0(V) = V$ and $\Prob^{n + 1}(V) = \Prob(\Prob^n(V))$ for every $n \in \N$. Then for every $n \in \N$, we may regard $\Prob^n(V)$ as a Borel $\Gamma$-space (resp.\! $G$-space) and we denote by $\Map_\Gamma(G/L, \Prob^n(V))$ (resp.\! $\Map_G(G/L, \Prob^n(V))$) the space of all equivalence classes of $\Gamma$-equivariant (resp.\! $G$-equivariant) Borel maps $\beta : G/L \to \Prob^n(V)$. Assume moreover that $L < G$ is noncompact. Since $\Gamma < G$ is irreducible, Moore's ergodicity theorem implies that $\Gamma \curvearrowright G/L$ is ergodic (see \cite{HM77}). 

\begin{thm}[{\cite[Theorem 1.6]{BFGW12}}]\label{thm-equivariant}
Keep the same notation as above. Then for every $n \in \N$, we have
$$\Map_\Gamma(G/L, \Prob^n(V)) = \Map_G(G/L, \Prob^n(V)) \cong \Prob^n(V^L),$$
where $w \in \Prob^n(V^L)$ corresponds to the measurable $G$-equivariant map $\Phi_w : G/L \to \Prob^n(V) : gL \mapsto g \cdot w$.
 
In particular, in the case when $L = H$, we have 
$$\Aut_\Lambda(G/H) = \Aut_\Gamma(G/H) = \Aut_G(G/H) \cong \mathscr W_G.$$
\end{thm}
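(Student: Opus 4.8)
The plan is to deduce the ``in particular'' statement from the first assertion of Theorem \ref{thm-equivariant} by specializing to $n = 0$, $L = H$, and a suitable choice of $k_i$-$\mathbf G_i$-variety $\mathbf V_i$, and then reading off the Weyl group via Proposition \ref{prop-weyl}. The first, trivial, step is that $\Aut_\Lambda(G/H) = \Aut_\Gamma(G/H)$: since $\mathscr Z(\Gamma) < \ker(\Gamma \curvearrowright G/H)$, the $\Gamma$-action on $G/H$ factors through $\Lambda = \Gamma/\mathscr Z(\Gamma)$, so a class of Borel self-maps of $G/H$ is $\Gamma$-equivariant exactly when it is $\Lambda$-equivariant.

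The second step reduces $\Gamma$-equivariance to $G$-equivariance. For every $i$, take $\mathbf V_i = (\mathbf G_i/\mathbf P_i) \times (\mathbf G_i/\mathbf P_i)$, a projective $k_i$-$\mathbf G_i$-variety (it carries the $k_i$-point $(e\mathbf P_i, e\mathbf P_i)$), and set $V = \prod_{i=1}^d \mathbf V_i(k_i)$. Since $\mathbf G_i(k_i)$ acts transitively on $(\mathbf G_i/\mathbf P_i)(k_i)$, we have $V = (G/P) \times (G/P)$, and by the identification recalled in the introduction there is a $G$-equivariant isomorphism of nonsingular $G$-spaces $G/H \cong V$; note $V$ is noncompact, matching the standing hypothesis that the ``$L$'' in Theorem \ref{thm-equivariant} be noncompact (here $L = H$, which is noncompact by assumption). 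Now any $\Phi \in \Aut_\Gamma(G/H)$ transports through this isomorphism to a $\Gamma$-equivariant nonsingular automorphism of $V$, which, applying $G/H \cong V$ once more on the source, is an element of $\Map_\Gamma(G/H, \Prob^0(V))$. By the first assertion of Theorem \ref{thm-equivariant} with $L = H$ and $n = 0$, this element lies in $\Map_G(G/H, \Prob^0(V))$, that is, it is $G$-equivariant; transporting back, $\Phi \in \Aut_G(G/H)$. Since $\Aut_G(G/H) \subset \Aut_\Gamma(G/H)$ is obvious, we get $\Aut_\Gamma(G/H) = \Aut_G(G/H)$.

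The third step identifies $\Aut_G(G/H)$ with the Weyl group. By the general remark at the start of Section 2, $\Aut_G(G/H) = \mathscr N_G(H)/H$; since normalizers in a direct product decompose as products, $\mathscr N_G(H)/H = \prod_{i=1}^d \mathscr N_{\mathbf G_i(k_i)}(\mathbf H_i(k_i))/\mathbf H_i(k_i)$, and each factor is isomorphic to $\mathscr W_{\mathbf G_i}$ by Proposition \ref{prop-weyl}(ii). Hence $\Aut_G(G/H) \cong \prod_{i=1}^d \mathscr W_{\mathbf G_i} = \mathscr W_G$, and chaining the three steps yields $\Aut_\Lambda(G/H) = \Aut_\Gamma(G/H) = \Aut_G(G/H) \cong \mathscr W_G$. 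The one input of substance is the first assertion of Theorem \ref{thm-equivariant} — the rigidity theorem of Bader--Furman--Gorodnik--Weiss — and the present argument merely packages the data; the point requiring care is exhibiting in the second step a $k_i$-$\mathbf G_i$-algebraic variety whose $k_i$-points realize the nonsingular $G$-space $G/H$, together with the verification that moving back and forth between $G/H$ and $V$ loses no measure-class information, which is precisely the content of the $G$-space isomorphism $G/H \cong (G/P)\times(G/P)$.
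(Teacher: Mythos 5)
Your derivation of the ``in particular'' statement is essentially sound and close to the paper's: the paper also deduces $\Aut_\Gamma(G/H) = \Aut_G(G/H)$ from the first assertion specialized to $n = 0$, $L = H$, and then identifies $\Aut_G(G/H) \cong \mathscr W_G$ via Proposition \ref{prop-weyl}(ii). The only difference is your choice of target variety: you take $\mathbf V_i = (\mathbf G_i/\mathbf P_i) \times (\mathbf G_i/\mathbf P_i)$ and transport through the measure-class isomorphism $G/H \cong G/P \times G/P$, whereas the more direct choice (implicit in the paper, and used again in Lemma \ref{lem-free}) is $\mathbf V_i = \mathbf G_i/\mathbf H_i$, with $G_i/H_i \subset (\mathbf G_i/\mathbf H_i)(k_i)$ an open and closed orbit; your route works, at the cost of the extra back-and-forth bookkeeping you acknowledge, and of the (standard, but unreferenced) fact that $(\mathbf G_i/\mathbf P_i)(k_i) = \mathbf G_i(k_i)/\mathbf P_i(k_i)$ for parabolic subgroups. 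Your product decomposition of $\mathscr N_G(H)/H$ and the appeal to Proposition \ref{prop-weyl}(ii) for each factor match what the paper does.

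The genuine gap is that you prove only half of the theorem. The first assertion, $\Map_\Gamma(G/L, \Prob^n(V)) = \Map_G(G/L, \Prob^n(V)) \cong \Prob^n(V^L)$, is not a verbatim quotation of \cite[Theorem 1.6]{BFGW12}: that theorem requires the ambient group to satisfy condition $(\ast)$ (no nontrivial algebraic compact factor groups) and concludes only that $\Gamma$-equivariant maps are $M$-equivariant for a certain canonical subgroup $M \leq G$ appearing in their statement. The paper's proof spends its main paragraph precisely on this reduction: using \cite[Theorem I.1.5.6 and Corollary I.5.6.7]{Ma91} for each Zariski connected, simply connected, $k_i$-isotropic, almost $k_i$-simple $\mathbf G_i$, it checks that every proper normal subgroup of $\mathbf G_i(k_i)$ is central and that $\mathbf G_i(k_i)$ has no proper finite-index subgroup, whence $G$ satisfies $(\ast)$ and has no proper finite-index subgroup, so that in the notation of \cite{BFGW12} one gets $M = G$ and hence $\Map_\Gamma = \Map_G$. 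By declaring the first assertion ``the one input of substance'' and treating it as a black box, you omit exactly this verification; as written, your argument establishes the ``in particular'' statement conditionally on the first assertion, but does not establish the theorem itself.
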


\begin{proof}
Let $\Phi : G/L \to \Prob^n(V)$ be a measurable $\Gamma$-equivariant map. We need to prove that there exists $w \in \Prob^n(V^L)$ such that $\Phi = \Phi_w$ almost everywhere, where $\Phi_w : G/L \to \Prob^n(V) : gL \mapsto g \cdot w$ is the measurable $G$-equivariant map associated with $w \in \Prob^n(V^L)$. 

By assumption, for every $i \in \{1, \dots, d\}$, $\mathbf G_i$ is a Zariski connected simply connected $k_i$-isotropic almost $k_i$-simple algebraic $k_i$-group. Then \cite[Theorem I.1.5.6 and Corollary I.5.6.7]{Ma91} imply that any proper normal subgroup of $\mathbf G_i(k_i)$ is contained in $\mathscr Z(\mathbf G_i)$ and $\mathbf G_i(k_i)$ has no proper finite index subgroup. This further implies that $G$ has no nontrivial algebraic compact factor group ($G$ satisfies condition $(\ast)$ in \cite{BFGW12}) and $G$ has no proper finite index subgroup. By applying \cite[Theorem 1.6]{BFGW12} (see also \cite[Corollary 7.1]{BFGW12}) and using the notation therein, we have that $M = G$ which implies that $M \cap L = L$ and so there exists $w \in \Prob^n(V^L)$ such that $\Phi = \Phi_w$ almost everywhere.

Assume now that $L = H$. Using Proposition \ref{prop-weyl}$(\rm ii), (\rm iii)$, we have the natural identifications $(G/H)^H =  \mathscr N_{G}(H)/H = \mathscr W_G$. We may now apply the previous paragraph to obtain
$$\Aut_\Lambda(G/H) = \Aut_\Gamma(G/H) = \Aut_G(G/H) \cong \mathscr W_G.$$
This finishes the proof.
\end{proof}

Since the Weyl group $\mathscr W_G$ coincides with the group $\Aut_\Lambda(G/H)$ of all $\Lambda$-equivariant nonsingular automorphisms of $G/H$, we may consider the well-defined nonsingular action $\Lambda \times \mathscr W_G \curvearrowright G/H$. The next lemma will be crucial in the proof of our main result.

\begin{lem}\label{lem-free}
The nonsingular action $\Lambda \times \mathscr W_G \curvearrowright G/H$ is essentially free.
\end{lem}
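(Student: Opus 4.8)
The plan is to prove essential freeness of $\Lambda \times \mathscr W_G \curvearrowright G/H$ by reducing the question to a fixed-point statement inside $G$ and then invoking a Borel density / algebraic argument together with the structure of $\Gamma$. Recall that $G/H \cong G/P \times G/P$ as nonsingular $G$-spaces, and $G$ acts on $G/P$ with measure class preserved; the key elementary fact I would use is that for a pair of minimal parabolics $(g_1 P, g_2 P)$ in general position (i.e.\ belonging to the unique open $G$-orbit in $G/P \times G/P$, which has full measure), the simultaneous stabilizer $\Stab_G(g_1 P) \cap \Stab_G(g_2 P) = g_1 P g_1^{-1} \cap g_2 P g_2^{-1}$ is conjugate to $H$. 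Concretely, after translating by $g_1$, a full-measure set of points of $G/H$ corresponds to pairs $(P, gP)$ with $g$ in the big Bruhat cell $P w_0 P$ relative to each factor, and the stabilizer of such a point in $G$ is exactly $H$ (this is the standard fact $P \cap \bar P = H$, $\bar P = w_0 P w_0^{-1}$).

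\smallskip

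\noindent\textbf{Step 1: Rewriting the fixed-point condition.} Fix $\gamma \in \Lambda$ and $w \in \mathscr W_G$, represented by $n = (n_i)_i \in \prod_i \mathscr N_{\mathbf G_i}(\mathbf S_i)(k_i) \subset \mathscr N_G(H)$. Lift $\gamma$ to $\tilde\gamma \in \Gamma < G$. By definition of the $\Lambda$-action (left translation on $G/H$) and of the $\mathscr W_G$-action ($gH \mapsto gHn^{-1} = gn^{-1}H$, using $n \in \mathscr N_G(H)$), the pair $(\gamma, w)$ fixes $gH \in G/H$ iff $\tilde\gamma g n^{-1} H = gH$, i.e.\ iff $g^{-1}\tilde\gamma g \in n H = Hn$. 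So the fixed-point set $\Fix(\gamma,w) \subset G/H$ is the image of $\{\, g \in G : g^{-1}\tilde\gamma g \in Hn \,\}$; I must show this has measure zero unless $\gamma = e$ and $w = e$ (and when $\gamma = e$, $w \neq e$, it is also null, since then the condition becomes $e \in Hn$, impossible for $n \notin H$).

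\smallskip

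\noindent\textbf{Step 2: The case $w = e$.} Here the condition is $g^{-1}\tilde\gamma g \in H$. If this holds on a non-null set of $g$, then by the natural isomorphism $G/H \cong G/P \times G/P$ and Fubini, $\tilde\gamma$ lies in a positive-measure set of conjugates of $H = P \cap w_0 P w_0^{-1}$; passing to the first factor, $\tilde\gamma \in gPg^{-1}$ for a positive-measure set of $gP \in G/P$. Since $\Gamma$ is a lattice, this forces $\tilde\gamma$ to act with a fixed point of positive measure on $G/P$, and by Furstenberg-type arguments (or directly: a single element of $G$ fixing a positive-measure subset of $G/P$ must be contained in the intersection of the corresponding parabolics, which is amenable, hence $\tilde\gamma$ generates a bounded-orbit cyclic group). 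Combining the two factors, $\tilde\gamma$ lies in a conjugate of $H$; but $H$ is amenable and the Zariski closure of $\langle \tilde\gamma \rangle$ must then be a torus-by-compact group. Since $\Gamma$ is an irreducible lattice in the semisimple group $G$ with the $\mathbf G_i$ almost simple, a nontrivial $\tilde\gamma$ normalizing (in the relevant sense) or lying in such a subgroup on a positive-measure set of conjugates is impossible — the clean way is: $\{g : g^{-1}\tilde\gamma g \in H\}$ is a Zariski-closed subset of $G$ (preimage of the closed set $\mathbf H$ under the morphism $g \mapsto g^{-1}\tilde\gamma g$ after complexifying each factor); a proper Zariski-closed subset is null for the Haar measure class, so positivity of measure forces $g^{-1}\tilde\gamma g \in H$ for \emph{all} $g \in G$, hence $\tilde\gamma \in \bigcap_g gHg^{-1}$, a normal subgroup of $G$ contained in $H$, hence central, hence $\tilde\gamma \in \mathscr Z(\Gamma)$, i.e.\ $\gamma = e$ in $\Lambda$.

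\smallskip

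\noindent\textbf{Step 3: The case $w \neq e$.} Now the condition $g^{-1}\tilde\gamma g \in Hn$ again defines a Zariski-closed subset of $G$ (preimage of the closed subvariety $\mathbf H \mathbf n$, or rather of the appropriate coset, under the same morphism, working coordinate-wise over each $k_i$ and using that $n$ is $k_i$-rational). As in Step 2, a positive-measure fixed set forces the condition to hold identically in $g$; specializing $g = e$ gives $\tilde\gamma \in Hn$, and specializing general $g$ gives $g^{-1}\tilde\gamma g \in Hn$ for all $g$, i.e.\ $g^{-1}(\tilde\gamma) g$ lies in the fixed coset $Hn$ for all $g$. Writing $\tilde\gamma = hn$ with $h \in H$, conjugating by elements of $S < H$ and using that $S$ acts with nontrivial weights on the unipotent radical shows the $H$-component cannot be constant unless the relevant unipotent parts vanish; more robustly, $\bigcup_g g^{-1}\tilde\gamma g$ being contained in the single coset $Hn$ of the \emph{connected} subgroup $H$ is only possible if the conjugacy class of $\tilde\gamma$ is a single point, i.e.\ $\tilde\gamma \in \mathscr Z(G)$, forcing $\tilde\gamma \in H$ (contradicting $\tilde\gamma \in Hn$ with $n \notin H$) — so no such $\gamma$ exists and $\Fix(\gamma,w)$ is null. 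The main obstacle is making Steps 2--3 rigorous in the $S$-arithmetic product setting: one must check that $\{g \in G : g^{-1}\tilde\gamma g \in \mathbf H(k) n\}$ really is (the $k$-points of) a Zariski-closed proper subvariety in each factor, and that ``proper Zariski-closed $\Rightarrow$ Haar-null'' applies over each local field $k_i$; this is where I would invoke \cite[Proposition I.2.5.3]{Ma91} (Zariski density of $k$-points in Zariski connected $k$-groups) and a standard null-set lemma for local fields, and where the irreducibility of $\Gamma$ enters to pin down the final algebraic conclusion about $\tilde\gamma$.
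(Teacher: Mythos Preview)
Your overall strategy coincides with the paper's: show that the fixed-point set of $(\gamma,w)$ is contained in (the $k$-points of) a proper Zariski-closed subvariety, hence is Haar-null. Your Step~2 is essentially correct once you reach ``the clean way'' (the meandering about parabolics and amenability before that is unnecessary), and matches the paper's argument that $\bigcap_{g} g\mathbf H_i g^{-1} = \mathscr Z(\mathbf G_i)$.

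The genuine gap is in Step~3. You assert that if the full $G$-conjugacy class of $\tilde\gamma$ is contained in the single coset $Hn$ of the connected group $H$, then the conjugacy class must be a single point. You give no argument for this, and it is not a general fact about cosets of connected subgroups; it is precisely the heart of the case $w\neq e$. The paper handles this quite differently and without ever invoking such a claim: working in a single factor $\mathbf G_i$, from $\gamma_i g\mathbf H_i n_i = g\mathbf H_i$ for all $g$ one first takes $g=e$ to get $\mathbf H_i n_i = \gamma_i^{-1}\mathbf H_i$, so $\gamma_i \in \mathscr N_{\mathbf G_i}(\mathbf H_i)$ and the relation becomes $\gamma_i g\gamma_i^{-1}\mathbf H_i = g\mathbf H_i$ for all $g$; specializing to $g\in\mathbf P_i$ gives $\gamma_i\mathbf P_i\gamma_i^{-1}\subset\mathbf P_i$, hence $\gamma_i\in\mathscr N_{\mathbf G_i}(\mathbf P_i)=\mathbf P_i$, and then $n_i\in\mathscr N_{\mathbf G_i}(\mathbf S_i)\cap\mathbf P_i = \mathscr Z_{\mathbf G_i}(\mathbf S_i) = \mathbf H_i$, reducing to the $w=e$ case. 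Your route could be salvaged by a Lie-algebra argument (differentiating the containment gives $(\Ad(\tilde\gamma)-1)\mathfrak g\subset\mathfrak h$, which is incompatible with $\Ad(\tilde\gamma)$ permuting root spaces by $w\neq e$), but you have not supplied this, and the phrase ``more robustly\dots only possible if the conjugacy class is a single point'' is not a proof. Finally, the paper carries out the entire argument factor-by-factor over a single local field $k_i$, which is exactly what makes the ``proper Zariski-closed $\Rightarrow$ null'' step clean; you flag this as ``the main obstacle'' but do not resolve it.
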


\begin{proof}
Let $(\lambda, w) \in \Lambda \times \mathscr W_G \setminus \{(e, e)\}$. We need to prove that the fixed-point subset $$\left \{gH \in G/H \mid (\lambda, w) \cdot gH = gH \right \}$$ is null in $G/H$. Choose $\gamma = (\gamma_i)_i \in G = \prod_{i = 1}^d \mathbf G_i(k_i)$ so that $\lambda = \gamma \mathscr Z(\Gamma)$ and $n = (n_i)_i \in \prod_{i = 1}^d \mathscr N_{\mathbf G_i}(\mathbf S_i)(k_i)$ so that $w = (n_i \mathbf H_i)_i$. Since $(\lambda, w) \neq (e, e)$, there exists $i \in \{1, \dots, d\}$ such that $\gamma_i \notin \mathscr Z(\mathbf G_i)$ or $n_i \notin \mathbf H_i$. Denote by $\pi_i : \mathbf G_i \to \mathbf G_i/\mathbf H_i$ the canonical algebraic $k_i$-morphism. Set $G_i = \mathbf G_i(k_i)$, $H_i = \mathbf H_i(k_i)$ and regard $G_i/H_i = \mathbf G_i(k_i)/\mathbf H_i(k_i) = \mathbf G_i(k_i) \pi_i(e) \subset (\mathbf G_i/\mathbf H_i)(k_i)$ as a closed and open subset (see e.g.\! \cite[Proposition I.2.1.4]{Ma91}). Denote by $p_i = {\pi_i}|_{G_i} : G_i \to G_i/H_i$ the corresponding restriction map. It suffices to show that the fixed-point subset 
$$W_i = \left \{gH_i \in G_i/H_i \mid \gamma_i gH_i n_i = gH_i \right \}$$ 
is null in $G_i/H_i$. By the general theory of locally compact second countable groups, $W_i$ is null in $G_i/H_i$ if and only if $V_i = p_i^{-1}(W_i)$ is null in $G_i$.

Denote by $\mathbf W_i$ the Zariski closure of $W_i$ in $\mathbf G_i/\mathbf H_i$ and observe that $\mathbf W_i \subset \left\{g\mathbf H_i \in \mathbf G_i/\mathbf H_i \mid \gamma_i g\mathbf H_i n_i = g\mathbf H_i\right\}$. By \cite[Theorem 14.4]{Bo91}, $\mathbf W_i \subset \mathbf G_i/\mathbf H_i$ is an algebraic subvariety defined over $k_i$ for which $W_i \subset G_i/H_i \cap \mathbf W_i(k_i)$. We claim that $\mathbf W_i$ is a proper subvariety of $\mathbf G_i/\mathbf H_i$. Indeed, otherwise if $\mathbf W_i = \mathbf G_i/\mathbf H_i$, then we have 
$$\mathbf G_i/\mathbf H_i = \left \{ g\mathbf H_i \in \mathbf G_i/\mathbf H_i \mid \gamma_i g\mathbf H_i n_i = g\mathbf H_i \right \}.$$
In particular, we have $n_i \mathbf H_i = \mathbf H_i n_i = \gamma_i^{-1} \mathbf H_i = \mathbf H_i \gamma_i^{-1}$ and so $\gamma_i g  \mathbf H_i \gamma_i^{-1} = g\mathbf H_i$ for every $g\mathbf H_i \in \mathbf G_i/\mathbf H_i$. Since $\mathbf H_i < \mathbf P_i$, this further implies that $\gamma_i \mathbf P_i \gamma_i^{-1} = \mathbf P_i$. Since $\mathscr N_{\mathbf G_i}(\mathbf P_i) = \mathbf P_i$, it follows that $\gamma_i \in \mathbf P_i$. Then we have $n_i \in \mathscr N_{\mathbf G_i}(\mathbf S_i) \cap \mathbf P_i = \mathscr Z_{\mathbf G_i}(\mathbf S_i) = \mathbf H_i$. Moreover, for every $g\mathbf H_i \in \mathbf G_i/\mathbf H_i$, we have $\gamma_i g \mathbf H_i = g \mathbf H_i$ and so $\gamma_i \in \bigcap_{g \in \mathbf G_i} g \mathbf H_i g^{-1}$. Since $\mathbf H_i < \mathbf G_i$ is a proper $k_i$-subgroup, $\bigcap_{g \in \mathbf G_i} g \mathbf H_i g^{-1} \lhd \mathbf G_i$ is a proper normal $k_i$-closed subgroup. Since $\mathbf G_i$ is almost $k_i$-simple, we have $\bigcap_{g \in \mathbf G_i} g \mathbf H_i g^{-1} = \mathscr Z(\mathbf G_i)$ and so $\gamma_i \in \mathscr Z(\mathbf G_i)$. Therefore, we showed that $\gamma_i \in \mathscr Z(\mathbf G_i)$ and $n_i \in \mathbf H_i$. This is a contradiction and so $\mathbf W_i$ is a proper algebraic subvariety of $\mathbf G_i/\mathbf H_i$.

It follows that $\mathbf U_i = \pi_i^{-1}(\mathbf W_i) $ is a proper algebraic subvariety of $\mathbf G_i$ such that $V_i = p_i^{-1}(W_i) \subset \pi_i^{-1}(\mathbf W_i) \cap G_i = \mathbf U_i \cap G_i$. Denote by $\mathbf V_i$ the Zariski closure of $\mathbf U_i \cap G_i$ in $\mathbf G_i$. By \cite[Theorem 14.4]{Bo91}, $\mathbf V_i \subset \mathbf G_i$ is an algebraic subvariety defined over $k_i$ such that $\mathbf V_i \subset \mathbf U_i$ and $V_i \subset \mathbf U_i \cap G_i \subset \mathbf V_i(k_i)$. Since $\mathbf G_i$ is connected, \cite[Proposition I.2.5.3$(\rm ii)$]{Ma91} implies that $\mathbf V_i(k_i)$ is null in $\mathbf G_i(k_i)$. This further implies that $V_i$ is null in $G_i$ and so $W_i$ is null in $G_i/H_i$.
\end{proof}

\subsection{Normal ucp maps and boundary maps}

We recall the following well-known result on equivariant normal ucp maps between abelian von Neumann algebras.

\begin{prop}\label{prop-ucp}
Let $L$ be a locally compact second countable group. Let $(X, \nu_X)$ be a standard probability $L$-space and $(Y, \nu_Y)$ a locally compact second countable Hausdorff topological $L$-space endowed with a fully supported Borel probability measure. 

To any $L$-equivariant normal ucp map $\Phi : \rL^\infty(Y, \nu_Y) \to \rL^\infty(X, \nu_X)$ corresponds an essentially unique $L$-equivariant measurable map $\beta : X \to \Prob(Y) : x \mapsto \beta_x$ such that $\bary(\beta_\ast \nu_X) = \nu_X \circ \Phi \prec \nu_Y$ and for $\nu_X$-almost every $x \in X$ and every $f \in \rC_0(Y)$, we have 
$$\beta_x(f) = \Phi(f)(x).$$
\end{prop}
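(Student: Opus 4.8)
Since $\rL^\infty$, the notion of a normal ucp map, the barycenter condition and absolute continuity depend only on the measure classes, I may assume that $\nu_X$ and $\nu_Y$ are probability measures. As $Y$ is locally compact second countable, $\rC_0(Y)$ is a separable $\rC^\ast$-algebra; fix a countable $\Q$-linear $\ast$-subalgebra $\mathcal D \subset \rC_0(Y)$ that is uniformly dense in $\rC_0(Y)$ and contains an increasing sequence $(g_n)_n$ with $0 \leq g_n \leq 1$ and $g_n \uparrow 1$ pointwise (possible since $Y$ is $\sigma$-compact). For each $f \in \mathcal D$, fix once and for all a Borel function on $X$ representing $\Phi(f) \in \rL^\infty(X, \nu_X)$. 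Because $\mathcal D$ is countable and $\Phi$ is positive, $\Q$-linear and contractive, there is a $\nu_X$-conull Borel set $X_0 \subset X$ such that for every $x \in X_0$ the functional $\ell_x : f \mapsto \Phi(f)(x)$ is positive and $\Q$-linear on $\mathcal D$ with $|\ell_x(f)| \leq \|f\|_\infty$; hence $\ell_x$ extends uniquely to a positive linear functional of norm $\leq 1$ on $\rC_0(Y)$, that is, to a subprobability measure $\beta_x \in \Prob_{\leq 1}(Y)$.

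The point where normality of $\Phi$ enters is to upgrade $\beta_x$ to a genuine probability measure: since $g_n \uparrow 1$ in $\rL^\infty(Y, \nu_Y)$, normality gives $\Phi(g_n) \uparrow \Phi(1) = 1$ in $\rL^\infty(X, \nu_X)$, so after shrinking $X_0$ we may assume $\beta_x(g_n) = \Phi(g_n)(x) \uparrow 1$ for all $x \in X_0$, whence $\beta_x \in \Prob(Y)$. Extend $\beta$ off $X_0$ by a fixed probability measure. The map $X \to \Prob(Y) : x \mapsto \beta_x$ is Borel, because the Borel structure of $\Prob(Y)$ (with the weak$^\ast$ topology) is generated by the evaluations $\mu \mapsto \mu(f)$ for $f \in \mathcal D$, and each $x \mapsto \beta_x(f) = \Phi(f)(x)$ is Borel. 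For a fixed $f \in \rC_0(Y)$, approximating $f$ uniformly by elements of $\mathcal D$ shows $\beta_x(f) = \Phi(f)(x)$ for $\nu_X$-almost every $x$, which is the last displayed identity. The barycenter identity now follows by testing against $\rC_0(Y)$: for $f \in \rC_0(Y)$ one has $\int_Y f \,\d(\bary(\beta_\ast \nu_X)) = \int_X \beta_x(f)\,\d\nu_X(x) = \int_X \Phi(f)(x)\,\d\nu_X(x) = (\nu_X \circ \Phi)(f)$, hence $\bary(\beta_\ast \nu_X) = \nu_X \circ \Phi$; and $\nu_X \circ \Phi \prec \nu_Y$ since $\Phi$ maps $\nu_Y$-null indicators to $0$.

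It remains to treat equivariance and uniqueness. Writing $\sigma^X, \sigma^Y$ for the $L$-actions on $\rL^\infty(X, \nu_X), \rL^\infty(Y, \nu_Y)$, the relation $\sigma^X_{g^{-1}} \circ \Phi = \Phi \circ \sigma^Y_{g^{-1}}$ gives, for each $g \in L$, $f \in \mathcal D$ and $\nu_X$-almost every $x$, the equality $\beta_{gx}(f) = \Phi(f)(gx) = (\sigma^X_{g^{-1}}\Phi(f))(x) = \Phi(\sigma^Y_{g^{-1}}f)(x) = \beta_x(\sigma^Y_{g^{-1}}f) = (g_\ast \beta_x)(f)$, so $\beta_{gx} = g \cdot \beta_x$ for $\nu_X$-a.e. $x$. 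By Fubini this holds for $(\mu_L \times \nu_X)$-almost every $(g,x)$, and a standard straightening argument (averaging $x \mapsto g^{-1}\cdot\beta_{gx}$ over a fixed compact neighbourhood of $e$ in $L$, or the Mackey cocycle argument) yields an honestly $L$-equivariant Borel representative still satisfying $\beta_x(f) = \Phi(f)(x)$ a.e. and the barycenter identity. Uniqueness is then clear: if $\beta'$ enjoys the same properties then $\beta'_x(f) = \Phi(f)(x) = \beta_x(f)$ for $\nu_X$-a.e. $x$ and all $f$ in the countable set $\mathcal D$, hence $\beta'_x = \beta_x$ for $\nu_X$-a.e. $x$. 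I expect the passage from almost-everywhere to genuine $L$-equivariance to be the only delicate point; everything else is routine disintegration.
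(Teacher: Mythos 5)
Your argument is correct, but it takes a different route from the paper. The paper does not rebuild the disintegration from scratch: it treats the compact case as known (the standard correspondence between equivariant normal ucp maps into $\rL^\infty(X,\nu_X)$ and equivariant measurable maps $X \to \Prob(Z)$ for compact metrizable $Z$), passes to the one-point compactification $Z = Y \sqcup \{\infty\}$ with $\rL^\infty(Y,\nu_Y) = \rL^\infty(Z,\nu_Z)$, and then uses normality of $\Phi$ on an increasing sequence $f_n \in \rC_0(Y)$ with $f_n \to \mathbf 1_Y$ to show $\beta_x(\{\infty\}) = 0$ for $\nu_X$-almost every $x$. Your proof instead reproves the compact-case machinery directly: pointwise Riesz representation of $f \mapsto \Phi(f)(x)$ on a countable dense $\Q$-$\ast$-subalgebra, with normality entering in exactly the same way (your $g_n \uparrow 1$ plays the role of the paper's $f_n$) to force total mass $1$. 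What the paper's reduction buys is that measurability, the barycenter identity, equivariance and uniqueness are all inherited from the cited compact statement; what your route buys is self-containedness, at the price of having to verify those points by hand. Two of your steps deserve the extra care you only sketch: positivity of the extension of $\ell_x$ from $\mathcal D$ to $\rC_0(Y)$ is not automatic from boundedness alone and should be run through squares $h^*h$ with $h \in \mathcal D$ (which your hypothesis that $\mathcal D$ is a $\ast$-subalgebra permits), and the passage from equivariance for each fixed $g$ on a conull set to a genuinely equivariant representative is the standard straightening lemma (essential-value/Mackey argument) rather than the naive averaging over a compact neighbourhood, which does not obviously produce an equivariant map; with those points made precise, your proof is complete and, unlike the paper's, does not invoke the compact case as a black box.
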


\begin{proof} Let $\Phi : \rL^\infty(Y, \nu_Y) \to \rL^\infty(X, \nu_X)$ be a $L$-equivariant normal ucp map.

Firstly, we assume that $Y$ is compact. Since $\nu_Y$ is fully supported on $Y$, we may regard $\rC(Y) \subset \rL^\infty(Y, \nu_Y)$ as a unital $\rC^*$-subalgebra. Since $Y$ is a compact second countable Hausdorff topological space, $Y$ is metrizable and thus $\rC(Y)$ is $\|\cdot\|_\infty$-separable. Denote by $D \subset \rL^\infty(X, \nu_X)$ the $L$-invariant $\|\cdot\|_\infty$-separable unital $\rC^*$-subalgebra of $\rL^\infty(X, \nu_X)$ generated by the subspace $\Phi(\rC(Y))$. Observe that the action $L \curvearrowright D$ is $\|\cdot\|_\infty$-continuous. Denote by $Z$ the spectrum of $D$ and by $\nu_Z \in \Prob(Z)$ the Borel probability measure corresponding to the state $\nu_X|_{D} \in \mathfrak S(D)$. Then $Z$ is a compact metrizable topological $L$-space and we have  a $L$-equivariant measurable factor map $\pi : (X, \nu_X) \to (Z, \nu_Z)$. Then to the $L$-equivariant ucp map $\Psi : \rC(Y) \to \rC(Z) : f \mapsto \Phi(f)$ corresponds a unique $L$-equivariant continuous map $\alpha : Z \to \Prob(Y) : z \mapsto \alpha_z$ such that $\bary(\alpha_\ast \nu_Z) = \nu_Z \circ \Psi \prec \nu_Y$ and for every $z \in Z$ and every $f \in \rC(Y)$, we have $\alpha_z(f) = \Psi(f)(z)$. Then the $L$-equivariant measurable map $\beta : X \to \Prob(Y) : x \mapsto \alpha_{\pi(x)}$ does the job.

Secondly, we assume that $Y$ is noncompact. Since $\nu_Y$ is fully supported on $Y$, we may regard $\rC_0(Y) \subset \rL^\infty(Y, \nu_Y)$ as a $\rC^*$-subalgebra. Denote by $Z = Y \sqcup \{\infty\}$ the one-point compactification of $Y$, which is a compact metrizable space. Define the Borel probability measure $\nu_Z \in \Prob(Z)$ by $\nu_Z|_Y = \nu_Y$ and $\nu_Z(\{\infty\}) = 0$. Regard $\Prob(Y) \subset \Prob(Z)$ as a Borel subset. Then we have $\rL^\infty(Y, \nu_Y) = \rL^\infty(Z, \nu_Z)$. Let $\Phi : \rL^\infty(Y, \nu_Y) \to \rL^\infty(X, \nu_X)$ be an $L$-equivariant normal ucp map. Since the result holds for the compact metrizable space $Z$, there exists an essentially unique $L$-equivariant measurable map $\beta : X \to \Prob(Z) : x \mapsto \beta_x$ such that $\bary(\beta_\ast \nu_X) = \nu_X \circ \Phi \prec \nu_Z$ and for $\nu_X$-almost every $x \in X$ and every $f \in \rC(Z)$, we have $\beta_x(f) = \Phi(f)(x)$. It suffices to prove that $\beta_x(\{\infty\}) = 0$ for $\nu_X$-almost every $x \in X$.

We may find a nondecreasing sequence $f_n \in \rC_0(Y)$ of compactly supported functions such that $0 \leq f_n \leq 1$ for every $n \in \N$ and $f_n \to \mathbf 1_Y$ pointwise. For every $n \in \N$, denote by $K_n = \supp(f_n)$ the compact support in $Y$ of $f_n \in \rC_0(Y)$. Lebesgue's dominated convergence theorem implies that $f_n \to \mathbf 1_Y$ strongly in $\rL^\infty(Y, \nu_Y)$. Since $\Phi : \rL^\infty(Y, \nu_Y) \to \rL^\infty(X, \nu_X)$ is normal, we have $\Phi(f_n) \to \mathbf 1_X$ strongly in $\rL^\infty(X, \nu_X)$. Upon taking a subsequence, we may further assume that there is a conull Borel subset $X_0 \subset X$ for which $\beta_x(f_n) = \Phi(f_n)(x) \to 1$ for every $x \in X_0$. Then for every $x \in X_0$, we have $\beta_x(f_n) \leq \beta_x(K_n) \leq 1$ for every $n \in \N$ and so $\lim_n \beta_x(K_n) = 1$. Since $(K_n)_n$ is nondecreasing, letting $K = \bigcup_{n \in \N} K_n \subset Y$, for every $x \in X_0$, we have that $\beta_x(K) = 1$. Thus, for every $x \in X_0$, we obtain $\beta_x(\{\infty\}) \leq \beta_x(Z \setminus K) = \beta_x(Z) - \beta_x(K) = 0$.
\end{proof}

As a corollary to Theorem \ref{thm-equivariant} and Proposition \ref{prop-ucp}, we obtain the following rigidity result for equivariant normal ucp maps.

\begin{thm}\label{thm-rigidity-ucp}
Let $\Phi : \rL^\infty(G/H) \to \rL^\infty(G/H)$ be a $\Lambda$-equivariant normal ucp map. Then there exists a unique tuple $(\alpha_w)_{w \in \mathscr W_G} \in [0, 1]^{\mathscr W_G}$ such that $\sum_{w \in \mathscr W_G} \alpha_w = 1$ and 
$$\Phi = \sum_{w \in \mathscr W_G} \alpha_w \theta_w.$$
\end{thm}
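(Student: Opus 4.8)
The plan is to reduce Theorem \ref{thm-rigidity-ucp} to the classification of equivariant Borel maps in Theorem \ref{thm-equivariant} via the dictionary of Proposition \ref{prop-ucp}. First I would apply Proposition \ref{prop-ucp} with $L = \Lambda$, $X = Y = G/H$ (equipped with a $\sigma$-finite representative of the invariant measure class, localized to a probability measure if needed) to the $\Lambda$-equivariant normal ucp map $\Phi : \rL^\infty(G/H) \to \rL^\infty(G/H)$. This produces an essentially unique $\Lambda$-equivariant measurable map $\beta : G/H \to \Prob(G/H) : x \mapsto \beta_x$ with $\beta_x(f) = \Phi(f)(x)$ for $f \in \rC_0(G/H)$. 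Thus $\beta \in \Map_\Lambda(G/H, \Prob(G/H))$, and since $\Gamma$ acts on $G/H$ through $\Lambda$, also $\beta \in \Map_\Gamma(G/H, \Prob^1(V))$ where $V = G/H$ viewed as the $k_i$-points of the $\mathbf G_i$-variety $\mathbf G_i/\mathbf H_i$.

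Next I would invoke Theorem \ref{thm-equivariant} with $L = H$, $n = 1$: it gives $\Map_\Gamma(G/H, \Prob(G/H)) = \Map_G(G/H, \Prob(G/H)) \cong \Prob((G/H)^H)$, where an element $\mu \in \Prob((G/H)^H)$ corresponds to the $G$-equivariant map $gH \mapsto g \cdot \mu$. By Proposition \ref{prop-weyl}$(\rm iii)$ we have $(G/H)^H = (\prod_i \mathbf G_i(k_i)/\mathbf H_i(k_i))^{\prod_i \mathbf H_i(k_i)} = \mathscr N_G(H)/H \cong \mathscr W_G$, a finite set. Hence $\beta$ is $G$-equivariant and there is a unique probability vector $(\alpha_w)_{w \in \mathscr W_G} \in [0,1]^{\mathscr W_G}$, $\sum_w \alpha_w = 1$, with $\beta_{eH} = \sum_{w \in \mathscr W_G} \alpha_w \delta_{\dot w}$, where $\dot w \in (G/H)^H$ is the point corresponding to $w$; equivalently $\beta_{gH} = \sum_w \alpha_w \, \delta_{g \dot w}$ for a.e.\ $gH$.

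Finally I would translate this back to the level of ucp maps. For $f \in \rC_0(G/H)$ and a.e.\ $gH$,
$$\Phi(f)(gH) = \beta_{gH}(f) = \sum_{w \in \mathscr W_G} \alpha_w \, f(g \dot w) = \sum_{w \in \mathscr W_G} \alpha_w \, (\theta_w f)(gH),$$
using that $\dot w = n\mathbf H$ for a representative $n$ of $w$ and that $\theta_w(F)(gH) = F(gHn^{-1}) = F(g\dot w^{-1}\cdot\text{(inverse convention)})$ — one must be careful to match the convention in the definition of $\theta_w$ in the introduction (the point $\dot w$ corresponding to $w$ under $\mathscr N_G(H)/H \cong \mathscr W_G$ is $n^{-1}\mathbf H$ or $n\mathbf H$ depending on the chosen identification, and I would fix the convention so that evaluation against $\beta_{gH}$ reproduces exactly $\theta_w$). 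Since $\rC_0(G/H)$ is weak-$\ast$ dense in $\rL^\infty(G/H)$ and both sides are normal, this extends to $\Phi = \sum_{w \in \mathscr W_G}\alpha_w\theta_w$ on all of $\rL^\infty(G/H)$. Uniqueness of $(\alpha_w)_w$ follows from the essential uniqueness of $\beta$ in Proposition \ref{prop-ucp}, or alternatively from linear independence of the $\theta_w$ as maps on $\rL^\infty(G/H)$, which in turn follows from the essential freeness of $\mathscr W_G \curvearrowright G/H$ contained in Lemma \ref{lem-free}.

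The main obstacle I anticipate is purely bookkeeping rather than conceptual: correctly identifying the finite set $(G/H)^H$ with $\mathscr W_G$ and the associated $G$-equivariant maps with the automorphisms $\theta_w$, keeping the left/right and inverse conventions consistent between Proposition \ref{prop-weyl}, the definition of $\theta_w$, and the correspondence in Theorem \ref{thm-equivariant}. A secondary point requiring a line of care is the passage from $\rC_0(G/H)$ to $\rL^\infty(G/H)$: one should note that $\Phi$ is determined on the weak-$\ast$ dense $\rC_0(G/H)$ and that $\sum_w \alpha_w\theta_w$ is normal, so the two normal maps agreeing on $\rC_0(G/H)$ must coincide.
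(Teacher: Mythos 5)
Your proposal is correct and follows essentially the same route as the paper: apply Proposition \ref{prop-ucp} to obtain the $\Lambda$-equivariant map $\beta : G/H \to \Prob(G/H)$, invoke Theorem \ref{thm-equivariant} (with $n=1$) together with Proposition \ref{prop-weyl}$(\rm iii)$ to identify $(G/H)^H \cong \mathscr W_G$ and write $\beta_b = \sum_{w} \alpha_w \delta_{w^{-1}b}$, and translate back to $\Phi = \sum_w \alpha_w \theta_w$. The convention-matching and $\rC_0$-to-$\rL^\infty$ points you flag are exactly the bookkeeping the paper compresses into its final sentence, and they pose no obstruction.
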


\begin{proof}
Fix a Borel probability measure $\nu \in \Prob(G/H)$ whose measure class coincides with the unique $G$-invariant measure class on $G/H$. Using Proposition \ref{prop-ucp}, there exists an essentially unique $\Lambda$-equivariant measurable map $\beta : G/H \to \Prob(G/H) : b \mapsto \beta_b$ such that $\bary(\beta_\ast \nu) = \nu \circ \Phi \prec \nu$ and for $\nu$-almost every $b \in G/H$ and every $f \in \rC_0(G/H)$, we have $\beta_b(f) = \Phi(f)(b)$. By Proposition \ref{prop-weyl}$(\rm ii), (\rm iii)$, we have the natural identifications $(G/H)^H =  \mathscr N_{G}(H)/H = \mathscr W_G$. By Theorem \ref{thm-equivariant} and since $\Prob((G/H)^H)$ is a finite dimensional simplex, there exists a unique tuple $(\alpha_w)_{w \in \mathscr W_G} \in [0, 1]^{\mathscr W_G}$ such that $\sum_{w \in \mathscr W_G} \alpha_w = 1$ and $\beta_b = \sum_{w \in \mathscr W_G} \alpha_w \delta_{w^{-1} b}$ for $\nu$-almost every $b \in G/H$. This means exactly that $\Phi = \sum_{w \in \mathscr W_G} \alpha_w \theta_w$.
\end{proof}

\subsection{The group measure space construction}

Let $L$ be a locally compact second countable group, $(X, \nu)$ a standard probability space and $L \curvearrowright (X, \nu)$ a nonsingular action. Following \cite[Definition 6.5]{BG14}, we say that $L \curvearrowright (X, \nu)$ is {\em metrically ergodic} if whenever $L \curvearrowright (Z, d)$ is a continuous isometric action on a separable metric space, every $L$-equivariant measurable map $F : X \to Z$ is $\nu$-almost everywhere constant. 

We prove the following general fact about the group measure space construction $\rL(\Upsilon \curvearrowright X)$ associated with a metrically ergodic nonsingular action $\Upsilon \curvearrowright (X, \nu)$.

\begin{prop}\label{prop-commutant}
Let $\Upsilon$ be a countable discrete group, $(X, \nu)$ a standard probability space and $\Upsilon \curvearrowright (X, \nu)$ a metrically ergodic nonsingular action. Then we have $\rL(\Upsilon)^\prime \cap \rL(\Upsilon \curvearrowright X) = \mathscr Z(\rL(\Upsilon))$.
\end{prop}

\begin{proof}
Denote by $\kappa : \Upsilon \to \mathscr U(\rL^2(X, \nu))$ the Koopman representation of the nonsingular action $\Upsilon \curvearrowright (X, \nu)$. Denote by $\sigma : \Upsilon \curvearrowright \rL^\infty(X, \nu)$ the von Neumann algebraic action corresponding to the nonsingular action $\Upsilon \curvearrowright (X, \nu)$. Then we have $\sigma_\gamma(F) = \kappa_\gamma F \kappa_\gamma^*$ for every $F \in \rL^\infty(X, \nu)$ and every $\gamma \in \Upsilon$. Set $\mathscr H = \rL^2(X, \nu) \otimes \ell^2(\Upsilon)$. Define $\pi : \rL^\infty(X) \to \rB(\mathscr H) : F \mapsto \sum_{\gamma \in \Upsilon} \sigma_\gamma(F) \otimes p_\gamma$, where $p_\gamma : \ell^2(\Upsilon) \to \C \delta_\gamma$ is the rank one projection corresponding to $\gamma \in \Upsilon$. We may regard the group measure space construction $\rL(\Upsilon \curvearrowright X)$ as the von Neumann subalgebra of $\rB(\mathscr H)$ generated by $\pi(F)$ and $1 \otimes \lambda_\gamma$ for $F \in \rL^\infty(X)$ and $\gamma \in \Upsilon$. Then it is well known that the commutant $\rL(\Upsilon \curvearrowright X)^\prime$ is generated by $F \otimes 1$ and $\kappa_\gamma \otimes J\lambda_\gamma J$ for $F \in \rL^\infty(X, \nu)$ and $\gamma \in \Upsilon$. Since $\rL(\Upsilon \curvearrowright X) = \rL(\Upsilon \curvearrowright X)\dpr$, we may view $\rL(\Upsilon \curvearrowright X) = (\rL^\infty(X) \ovt \rB(\ell^2(\Upsilon)))^\Upsilon$ as the fixed-point von Neumann subalgebra, where the action $\Upsilon \curvearrowright \rL^\infty(X) \ovt \rB(\ell^2(\Upsilon))$ is defined by the formula
$$\forall \gamma \in \Upsilon, \forall F \in \rL^\infty(X) \ovt \rB(\ell^2(\Upsilon)), \quad \gamma \cdot F = \left(\sigma_\gamma \otimes \Ad(J \lambda_\gamma J) \right)(F).$$
Then we may regard $\rL(\Upsilon)^\prime \cap \rL(\Upsilon \curvearrowright X)$ as the von Neumann algebra of all equivalence classes of $\Upsilon$-equivariant measurable functions $F : X \to J \rL(\Upsilon) J$, where we consider the action $\rho : \Upsilon \curvearrowright J \rL(\Upsilon) J$ given by $\rho_\gamma=\Ad(J\lambda_\gamma J)$ for $\gamma \in \Upsilon$. If we endow $J \rL(\Upsilon) J$ with the $\|\cdot\|_2$-norm, then $J \rL(\Upsilon) J$ is separable and $\rho$ is an isometric action. 

Since $\Upsilon \curvearrowright (X, \nu)$ is metrically ergodic, any $\Upsilon$-equivariant measurable function $F : X \to J \rL(\Upsilon) J$ is $\nu$-almost everywhere constant. This shows that $\rL(\Upsilon)^\prime \cap \rL(\Upsilon \curvearrowright X) = \mathscr Z(\rL(\Upsilon))$.
\end{proof}

We keep the same notation as in the introduction with $\Lambda = \Gamma/\mathscr Z(\Gamma)$, $M = \rL(\Lambda)$, $\mathscr A = \rL^\infty(G/H)$ and $\mathscr M = \rL(\Lambda \curvearrowright G/H)$. We recall the following well-known fact.

\begin{lem}\label{lem-icc}
The group $\Lambda$ has infinite conjugacy classes.
\end{lem}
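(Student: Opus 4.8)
The plan is to argue by contradiction using the Borel density theorem. Suppose $\Lambda$ does not have infinite conjugacy classes, so there is some $\lambda \in \Lambda \setminus \{e\}$ whose $\Lambda$-conjugacy class is finite; equivalently, the centralizer $C_\Lambda(\lambda)$ has finite index in $\Lambda$. Write $\pi : \Gamma \to \Lambda = \Gamma/\mathscr Z(\Gamma)$ for the quotient map and fix a lift $\gamma \in \Gamma$ of $\lambda$, so that $\gamma \notin \mathscr Z(\Gamma)$. Note first that $\mathscr Z(\Gamma)$ is finite: by the discussion in the introduction we have $\mathscr Z(\Gamma) < H \cap \mathscr Z(G) < \mathscr Z(G) = \prod_{i=1}^d \mathscr Z(\mathbf G_i)(k_i)$, and each $\mathscr Z(\mathbf G_i)$ is finite since $\mathbf G_i$ is semisimple.

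The first step is to deduce that $C_\Gamma(\gamma)$ has finite index in $\Gamma$. The preimage $\widetilde C = \pi^{-1}(C_\Lambda(\lambda))$ has finite index in $\Gamma$, and for every $g \in \widetilde C$ the commutator $g\gamma g^{-1}\gamma^{-1}$ lies in $\ker \pi = \mathscr Z(\Gamma)$, a finite set. The map $\widetilde C \to \mathscr Z(\Gamma)$, $g \mapsto g\gamma g^{-1}\gamma^{-1}$, has fibres that are left cosets of $C_\Gamma(\gamma)$ (if $g\gamma g^{-1}\gamma^{-1} = h\gamma h^{-1}\gamma^{-1}$ then $h^{-1}g \in C_\Gamma(\gamma)$), and $C_\Gamma(\gamma) \subseteq \widetilde C$; hence $[\widetilde C : C_\Gamma(\gamma)] \leq |\mathscr Z(\Gamma)| < \infty$, and therefore $[\Gamma : C_\Gamma(\gamma)] < \infty$.

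The second step uses the algebraic structure. Since $\Gamma < G = \prod_{i=1}^d \mathbf G_i(k_i)$ is an irreducible lattice and each $\mathbf G_i$ is $k_i$-isotropic almost $k_i$-simple, the Borel density theorem (see \cite{Ma91}) ensures that for each $i$ the projection $\mathrm{pr}_i(\Gamma) < \mathbf G_i(k_i)$ is Zariski dense in $\mathbf G_i$. As $\mathrm{pr}_i(C_\Gamma(\gamma))$ has finite index in $\mathrm{pr}_i(\Gamma)$, its Zariski closure has finite index in $\mathbf G_i$; since $\mathbf G_i$ is Zariski connected, $\mathrm{pr}_i(C_\Gamma(\gamma))$ is itself Zariski dense in $\mathbf G_i$. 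Writing $\gamma = (\gamma_i)_i$, every element of $\mathrm{pr}_i(C_\Gamma(\gamma))$ commutes with $\gamma_i$, so the Zariski-closed subgroup $\mathscr Z_{\mathbf G_i}(\gamma_i)$ contains a Zariski dense subgroup of $\mathbf G_i$ and hence equals $\mathbf G_i$; that is, $\gamma_i \in \mathscr Z(\mathbf G_i)(k_i)$ for every $i$. Consequently $\gamma \in \mathscr Z(G) \cap \Gamma = \mathscr Z(\Gamma)$, contradicting $\gamma \notin \mathscr Z(\Gamma)$. This proves that $\Lambda$ has infinite conjugacy classes.

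The only non-elementary ingredient is the Zariski density of $\mathrm{pr}_i(\Gamma)$ in $\mathbf G_i$; for $d = 1$ this is exactly the Borel density theorem, and for $d \geq 2$ it is the standard fact that irreducible lattices project densely (hence Zariski densely) onto each almost-simple factor, the mild subtlety being that the factors live over different local fields, which is handled factor by factor or via restriction of scalars. Everything else is routine group theory.
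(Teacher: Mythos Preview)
Your proof is correct and follows essentially the same approach as the paper's: reduce to showing that the centralizer $C_\Gamma(\gamma)$ of a non-central $\gamma\in\Gamma$ cannot have finite index, then use density of its projection in each simple factor to force each $\gamma_i$ to be central. The only minor differences are that you make the reduction from $\Lambda$ to $\Gamma$ explicit via the finiteness of $\mathscr Z(\Gamma)$, and you treat all $d$ uniformly using Zariski density and Zariski connectedness of $\mathbf G_i$, whereas the paper splits into the case $d=1$ (Borel density applied to the lattice $\mathscr Z_\Gamma(\gamma)$) and the case $d\geq 2$ (analytic density of $q_i(\Gamma)$ in $G_i$ combined with the fact that $G_i$ has no proper finite-index subgroup).
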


\begin{proof}
Let $\gamma \in \Gamma$ be such that its conjugacy class $C(\gamma) = \left \{ h \gamma h^{-1} \mid h \in \Gamma\right \}$ is finite. We need to show that $\gamma \in \mathscr Z(\Gamma)$. There are two cases to consider.

Firstly, assume that $d = 1$. Simply write $\mathbf G = \mathbf G_1$. By assumption on $\mathbf G$, since $\mathscr Z_\Gamma(\gamma) < \Gamma$ has finite index, $\mathscr Z_\Gamma(\gamma) < G$ is a lattice and so $\mathscr Z_\Gamma(\gamma)$ is Zariski dense in $\mathbf G$ by Borel's density theorem \cite{Bo60} (see also \cite{Sh97, BDL14}). Since $\mathscr Z_\Gamma(\gamma) < \mathscr Z_{\mathbf G}(\gamma)$, this implies that $\mathbf G = \mathscr Z_{\mathbf G}(\gamma)$ and so $\gamma \in \mathscr Z(\mathbf G)$, hence $\gamma \in \mathscr Z(\Gamma)$.

Secondly, assume that $d \geq 2$. Let $i \in \{1, \dots, d\}$, set $G_i = \mathbf G_i(k_i)$ and denote by $q_i : G \to G_i$ the quotient homomorphism. By assumption, $q_i(\Gamma) < G_i$ is dense for the analytic topology. Since $\mathscr Z_\Gamma(\gamma) < \Gamma$ has finite index, it follows that the closure of $q_i(\mathscr Z_\Gamma(\gamma))$ in $G_i$ for the analytic topology has finite index in $G_i$. Since $G_i$ has no proper finite index subgroup (see \cite[Corollary I.1.5.7]{Ma91}), $q_i(\mathscr Z_\Gamma(\gamma)) < G_i$ is dense for the analytic topology. Since $q_i(\mathscr Z_\Gamma(\gamma)) < \mathscr Z_{G_i}(q_i(\gamma))$, this implies that $G_i =  \mathscr Z_{G_i}(q_i(\gamma))$ and so $q_i(\gamma) \in \mathscr Z(G_i)$. Since this holds for every $i \in \{1, \dots, d\}$, this further implies that $\gamma \in \mathscr Z(G)$ and so $\gamma \in \mathscr Z(\Gamma)$.
\end{proof}

By assumption, the locally compact group $G$ is unimodular. The closed subgroup $H < G$ is also unimodular because $S$ is central in $H$ and the quotient group $H/S$ is compact. Thus, \cite[Corollary B.1.7]{BHV08} implies that the homogeneous space $G/H$ carries a $G$-invariant $\sigma$-finite infinite measure $m$ whose measure class coincides with the unique $G$-invariant measure class on $G/H$. We denote by $\rE : \mathscr M \to \mathscr A : \sum_{\gamma \in \Lambda} a_\gamma u_\gamma \mapsto a_e$ the canonical faithful normal conditional expectation. We may then fix a faithful normal semifinite trace $\Tr$ on $\mathscr M$ so that $\Tr \circ \rE = \Tr$ and $\Tr|_{\mathscr A} = \int_{G/H} \cdot \, {\rm d}m$.

\begin{cor}\label{cor-commutant}
The inclusion $M = \rL(\Lambda) \subset \rL(\Lambda \curvearrowright G/H) = \mathscr M$ is ergodic in the sense that $M^\prime \cap \mathscr M = \C 1$.
\end{cor}

\begin{proof}
Since $G = \prod_{i = 1}^d \mathbf G_i(k_i)$ and $H = \prod_{i = 1}^d \mathbf H_i(k_i)$, where $\mathbf G_1, \dots, \mathbf G_d$ are chosen as in the introduction, and since $\Gamma < G$ is a lattice, \cite[Theorem 6.6 and Corollary 6.7]{BG14} imply that the nonsingular action $\Gamma \curvearrowright G/H$ is metrically ergodic and so is $\Lambda \curvearrowright G/H$. By combining Proposition \ref{prop-commutant} and Lemma \ref{lem-icc}, we have $M^\prime \cap \mathscr M = \rL(\Lambda)^\prime \cap \rL(\Lambda \curvearrowright G/H) = \mathscr Z(\rL(\Lambda)) = \C 1$.
\end{proof}

\section{Proofs of the main results} 

Let $\mathscr N$ be a von Neumann algebra and denote by $\mathscr V(\mathscr N)$ the set of partial isometries of $\mathscr N$. Endow the set $\mathscr V(\mathscr N)$ with the following order relation: for all $u_1, u_2 \in \mathscr V(\mathscr N)$, we have $u_1 \leq u_2$ if and only if $u_1 = u_2 u_1^*u_1$. It is well known that the partially ordered set $(\mathscr V(\mathscr N), \leq)$ is inductive.

\subsection{Proof of Theorem \ref{main-theorem-weyl}}

We still denote by $\sigma : \Lambda \curvearrowright \mathscr M$ the conjugation action that naturally extends the action $\Lambda \curvearrowright \mathscr A$.

Let $\Theta \in \Aut_M(\mathscr M)$. Then $\Phi = \rE \circ \Theta|_{\mathscr A} : \mathscr A \to \mathscr A$ is a $\Lambda$-equivariant normal ucp map. By Theorem \ref{thm-rigidity-ucp}, there exists a unique tuple $(\alpha_w)_{w \in \mathscr W_G} \in [0, 1]^{\mathscr W_G}$ such that $\sum_{w \in \mathscr W_G} \alpha_w = 1$ and $\Phi = \sum_{w \in \mathscr W_G} \alpha_w \theta_w$. Let $w_0 \in \mathscr W_G$ be such that $\alpha_{w_0} > 0$. Upon replacing $\Theta$ by $\Theta \circ \Theta_{w_0}^{-1}$, without loss of generality, we may further assume that $\alpha_e > 0$. We need to show that $\Theta = \id_{\mathscr M}$.

Firstly, we use a convexity argument to prove that there exists a partial isometry $v \in \mathscr V(\mathscr M)$ such that $\Theta (a) v = v a $ for every $a \in \mathscr A$. Indeed, let $p \in \mathscr A$ be a nonzero finite trace projection and denote by $\mathscr C$ the weak closure of the convex hull of the set
$$\mathscr S = \left \{ \left(\Theta(u) p u^*, (\Theta_w(u) p u^*)_{w \in \mathscr W_G\setminus \{e\}}\right) \mid u \in \mathscr U(\mathscr A)\right \} \subset \mathscr M^{\oplus n},$$ 
where $n = |\mathscr W_G|$. Denote by $\tau = \Tr^{\oplus n}$ the canonical faithful normal semifinite trace on $\mathscr M^{\oplus n}$. Since $\mathscr S$ is both uniformly bounded and $\|\cdot\|_{2, \tau}$-bounded, it follows that the image of $\mathscr C$ in the Hilbert space $\rL^2(\mathscr M^{\oplus n}, \tau)$ is closed. Denote by $(c, (c_w)_{w \in \mathscr W_G\setminus \{e\}})$ the unique element of minimal $\|\cdot\|_{2, \tau}$-norm. Since for every $u \in \mathscr U(\mathscr A)$, we have
$$\left\| \left(\Theta(u)cu^*, (\Theta_w(u)c_wu^*)_{w \in \mathscr W_G\setminus \{e\}} \right) \right\|_{2, \tau} = \left\| \left(c, (c_w)_{w \in \mathscr W_G\setminus \{e\}} \right)\right\|_{2, \tau}$$
it follows that 
$$\left(\Theta(u)cu^*, (\Theta_w(u)c_wu^*)_{w \in \mathscr W_G\setminus \{e\}} \right) = \left(c, (c_w)_{w \in \mathscr W_G \setminus \{e\}} \right).$$
Let $w \in \mathscr W_G\setminus \{e\}$. Then for every $u \in \mathscr U(\mathscr A)$, we have $\theta_w(u)c_w = \Theta_w(u)c_w = c_w u$. Since the action $\mathscr W_G \curvearrowright G/H$ is essentially free (see Lemma \ref{lem-free}), it follows that $c_w = 0$. For every $u \in \mathscr U(\mathscr A)$, we have 
$$\rE(\Theta(u)p u^*) = \rE(\Theta(u)) p u^* = \alpha_e p + \sum_{w \in \mathscr W_G\setminus \{e\}} \alpha_w\Theta_w(u) p u^*.$$
Using convexity, taking limits, and the fact that $c_w = 0$ for every $w \in \mathscr W_G \setminus \{e\}$, we obtain $\rE(c) = \alpha_e p \neq 0$ and so $c \neq 0$. Write $c = v |c|$ for the polar decomposition of $c \in \mathscr M$. Then $v \neq 0$ and $\Theta(u)v = v u$ for every $u \in \mathscr U(\mathscr A)$. Therefore, we have $\Theta(a)v = va$ for every $a \in \mathscr A$. Since $\mathscr A' \cap \mathscr M = \mathscr A$, it follows that $v^*v \in \mathscr A' \cap \mathscr M = \mathscr A$ and $vv^* \in \Theta(\mathscr A)' \cap \mathscr M =\Theta(\mathscr A)$. Moreover, we have $\Theta(v^*v) v = v$ which implies that $\Theta(v^*v)vv^* = vv^*$ and so $vv^* \leq \Theta(v^*v)$. Let $r \in \mathscr A$ be the unique projection such that $vv^* = \Theta(r)$. Then we have $\Theta(r) v = v r$ which implies that $v^*v = v^*v r$ and so $v^*v \leq r$. This shows that $\Theta(v^*v) \leq \Theta(r) = vv^*$ and so $\Theta(v^*v) = vv^*$. Upon cutting down $v$ on the right hand side by a nonzero finite trace projection in $\mathscr A$, we may further assume that $v \neq 0$ and $\Tr(v^*v) = \Tr(vv^*) < +\infty$. Since $\Theta|_{\rL(\Lambda)} = \id_{\rL(\Lambda)}$, we have that $\Theta(a)\sigma_\gamma(v) = \sigma_\gamma(v) a$ for every $a \in \mathscr A$ and every $\gamma \in \Lambda$. 

Secondly, we glue together the partial isometries $\sigma_\gamma(v)$ for $\gamma \in \Lambda$ in order to construct a unitary $u \in \mathscr U(\mathscr M)$ such that $\Theta(a) = u a u^*$ for every $a \in \mathscr A$. Indeed, fix an enumeration $\Lambda = \left\{ \gamma_n \mid n \in \N \right\}$ such that $\gamma_0 = e$. For every $n \in \N$, set $p_n = \bigvee_{0 \leq k \leq n} \sigma_{\gamma_k}(v^*v) \in \mathscr A$, $q_n = \bigvee_{0 \leq k \leq n} \sigma_{\gamma_k}(vv^*) \in \Theta(\mathscr A)$ and observe that $\max(\Tr(p_n), \Tr(q_n)) < +\infty$. In particular, we have $p_n \neq 1$ and $q_n \neq 1$. By induction over $n \in \N$, we construct a nondecreasing sequence of partial isometries $(u_n)_n$ in $\mathscr V(\mathscr M)$ such that $\Theta(a)u_n = u_n a$ for every $a \in \mathscr A$, $u_n^* u_n = p_n$, $u_n u_n^* = q_n$. Set $u_0 = v$. Assume that $u_n \in \mathscr V(\mathscr M)$ has been constructed. The same reasoning as in the previous paragraph shows that $\Theta(p_n) = \Theta(u_n^*u_n) = u_nu_n^* = q_n$. Set $r_n = p_{n+1} - p_n = \sigma_{\gamma_{n + 1}}(v^*v) - \sigma_{\gamma_{n + 1}}(v^*v)p_n \in \mathscr A$ and $s_n = q_{n+1} - q_n = \sigma_{\gamma_{n + 1}}(vv^*) - \sigma_{\gamma_{n + 1}}(vv^*)q_n \in \Theta(\mathscr A)$. We have 
\begin{align*}
\Theta(r_n) &= \sigma_{\gamma_{n + 1}}(\Theta(v^*v)) - \sigma_{\gamma_{n + 1}}(\Theta(v^*v)) \Theta(p_n) \\
&= \sigma_{\gamma_{n + 1}}(vv^*) - \sigma_{\gamma_{n + 1}}(vv^*)q_n \\
&= s_n.
\end{align*}
We define the partial isometry $u_{n + 1} \in \mathscr V(\mathscr M)$ by $u_{n + 1} = u_n + s_n \sigma_{\gamma_{n + 1}}(v) r_n$ so that
$$q_nu_{n + 1}p_n = u_n \quad \text{and} \quad s_n u_{n + 1} r_n = s_n \sigma_{\gamma_{n + 1}}(v) r_n.$$
We have
\begin{align*}
(s_n \sigma_{\gamma_{n + 1}}(v) r_n)^* (s_n \sigma_{\gamma_{n + 1}}(v) r_n) &= r_n  \sigma_{\gamma_{n + 1}}(v)^* s_n  \sigma_{\gamma_{n + 1}}(v) r_n \\
&= r_n  \sigma_{\gamma_{n + 1}}(v)^* \Theta(r_n)  \sigma_{\gamma_{n + 1}}(v) r_n \\
&= r_n  \sigma_{\gamma_{n + 1}}(v^*v) r_n \\
&= r_n.
\end{align*} 
Likewise, we have 
$$(s_n \sigma_{\gamma_{n + 1}}(v) r_n) (s_n \sigma_{\gamma_{n + 1}}(v) r_n)^* = s_n.$$ 
Then it is plain to see that $\Theta(a)u_{n + 1} = u_{n + 1} a$ for every $a \in \mathscr A$, $u_{n + 1}^* u_{n + 1} = p_n + r_n = p_{n + 1}$ and $u_{n+1} u_{n+1}^* = q_n + s_n = q_{n + 1}$. By construction, we also have $u_n \leq u_{n+1}$. Therefore, by induction, we have constructed the desired nondecreasing sequence $(u_n)_n$ in $\mathscr V(\mathscr M)$. Then we may define $u = \sup u_n \in \mathscr V(\mathscr M)$ and observe that $u^*u = \bigvee_{\gamma \in \Lambda} \sigma_\gamma(v^*v)$ and $uu^* = \bigvee_{\gamma \in \Lambda} \sigma_\gamma(vv^*)$. Since the nonsingular action $\Lambda \curvearrowright G/H$ is ergodic, it follows that $u^*u = 1 = uu^*$ and so $u \in \mathscr U(\mathscr M)$. Then we have $\Theta(a) = u a u^*$ for every $a \in \mathscr A$.

Thirdly, we exploit the fact that the nonsingular action $\Lambda \times \mathscr W_G \curvearrowright G/H$ is essentially free to show that $u \in \mathscr U(\mathscr A)$ and $\Theta = \id_{\mathscr M}$. Indeed, write $u = \sum_{\gamma \in \Lambda} a_\gamma u_\gamma$ for the Fourier expansion of $u \in \mathscr A \rtimes \Lambda = \mathscr M$. Then for every $a \in \mathscr A$, we have
$$\sum_{w \in \mathscr W_G} \alpha_w \theta_w(a) = \rE(\Theta(a)) = \rE(u a u^*) = \sum_{\gamma \in \Lambda} |a_\gamma|^2 \sigma_\gamma(a).$$
Applying Proposition \ref{prop-ucp}, denote by $\beta : G/H \to \Prob(G/H) : b \mapsto \beta_b$ the essentially unique measurable map corresponding to the normal ucp map $\Phi = \rE\circ \Theta|_{\mathscr A} : \mathscr A \to \mathscr A$. Then for $\nu$-almost every $b \in G/H$, we have
$$\sum_{\gamma \in \Lambda} |a_\gamma|^2(b) \, \delta_{\gamma^{-1} b} = \beta_b = \sum_{w \in \mathscr W_G} \alpha_w \, \delta_{w^{-1} b}.$$
For $\nu$-almost every $b \in G/H$, we consider the set of atoms of the probability measure $\beta_b$. Then Lemma \ref{lem-free} further implies that for every $w \neq e$, we have $\alpha_w = 0$ and for every $\gamma \neq e$, we have $a_\gamma = 0$ $\nu$-almost everywhere. Moreover, we have $\alpha_e = 1 = |a_e|^2$ and so $u = a_e \in \mathscr U(\mathscr A)$. Thus, $\Theta|_{\mathscr A} = \id_{\mathscr A}$ and so $\Theta = \id_{\mathscr M}$. This finishes the proof of Theorem \ref{main-theorem-weyl}.

\subsection{Proof of Corollary \ref{main-cor-connes}}

Let $\Psi : \rL(\Lambda_1 \curvearrowright G_1/H_1) \to \rL(\Lambda_2 \curvearrowright G_2/H_2)$ be a surjective unital normal $\ast$-isomorphism such that $\Psi(\rL(\Lambda_1)) = \rL(\Lambda_2)$. For every $j \in \{1, 2\}$, set $M_j = \rL(\Lambda_j) \subset \rL(\Lambda_j \curvearrowright G_j/H_j) = \mathscr M_j$. The map 
$$\Aut_{M_1}(\mathscr M_1) \to \Aut_{M_2}(\mathscr M_2) : \Theta \mapsto \Psi \circ \Theta \circ \Psi^{-1}$$
is a group isomorphism. By Theorem \ref{main-theorem-weyl}, the mapping $\mathscr W_{G_1} \to \mathscr W_{G_2} : w \mapsto \rho_2^{-1}(\Psi \circ \rho_1(w) \circ \Psi^{-1})$ is a group isomorphism.

\subsection{Further rigidity results}

In this subsection, we record a von Neumann algebraic rigidity result regarding isomorphisms between group measure space von Neumann algebras arising from actions of irreducible lattices on algebraic homogeneous spaces. 

Keep the same notation as before. For every $j \in \{1, 2\}$ and every $i \in \{1, \dots, d\}$, let $\mathscr Z(\mathbf G_i) < \mathbf L_{i, j} <\mathbf G_i$ be a proper Zariski connected $k_i$-subgroup. Set $L_j = \prod_{i = 1}^d \mathbf L_{i, j}(k_i) < G$. Since $\mathscr Z(\Gamma) < L_j$, we may consider the nonsingular action $\Lambda \curvearrowright G/L_j$. Assume moreover that $L_j < G$ is noncompact. Since $\Gamma < G$ is irreducible, Moore's ergodicity theorem implies that $\Lambda \curvearrowright G/L_j$ is ergodic (see \cite{HM77}).

\begin{thm}\label{thm-rigidity-homogeneous}
Let $\Psi : \rL(\Lambda \curvearrowright G/L_1) \to \rL(\Lambda \curvearrowright G/L_2)$ be a surjective unital normal $\ast$-isomorphism such that $\Psi|_{\rL(\Lambda)} = \id_{\rL(\Lambda)}$. Then there exists $g \in G$ such that $g L_1 g^{-1} = L_2$.
\end{thm}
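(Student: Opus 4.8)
The plan is to reduce Theorem \ref{thm-rigidity-homogeneous} to the same three-step scheme used in the proof of Theorem \ref{main-theorem-weyl}, adapted to the setting of two (a priori different) noncompact homogeneous spaces $G/L_1$ and $G/L_2$. Write $\mathscr M_j = \rL(\Lambda \curvearrowright G/L_j)$ with Cartan subalgebra $\mathscr A_j = \rL^\infty(G/L_j)$, canonical conditional expectation $\rE_j : \mathscr M_j \to \mathscr A_j$, and $M = \rL(\Lambda) \subset \mathscr M_j$ for $j \in \{1, 2\}$. Given the isomorphism $\Psi$ with $\Psi|_M = \id_M$, consider the $\Lambda$-equivariant normal ucp maps $\Phi = \rE_1 \circ \Psi^{-1}|_{\mathscr A_2} : \mathscr A_2 \to \mathscr A_1$ and, symmetrically, $\Phi' = \rE_2 \circ \Psi|_{\mathscr A_1} : \mathscr A_1 \to \mathscr A_2$. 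The first and key step is to invoke Theorem \ref{thm-equivariant} (in the generality of arbitrary noncompact $\mathbf L_i(k_i)$, not just $\mathbf H_i$) together with Proposition \ref{prop-ucp} to describe these ucp maps: a $\Lambda$-equivariant normal ucp map $\mathscr A_2 \to \mathscr A_1$ corresponds to a $\Lambda$-equivariant — hence $G$-equivariant — measurable map $G/L_1 \to \Prob(G/L_2)$, which by Theorem \ref{thm-equivariant} is given by a point of $\Prob((G/L_2)^{L_1})$. The upshot is that $\Phi$ (resp.\ $\Phi'$) is determined by a probability measure supported on the $L_1$-fixed points in $G/L_2$ (resp.\ the $L_2$-fixed points in $G/L_1$), and each such fixed point $gL_2$ with $L_1 g L_2 = g L_2$ forces $g^{-1} L_1 g < L_2$.

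Having this, the second step is the convexity/maximality argument, verbatim from the proof of Theorem \ref{main-theorem-weyl}: decompose $\Phi$ over the (at most countable, in fact finite once one fixes a $G$-orbit structure) set of fixed points, pick a fixed point $g_0 L_2$ appearing with positive weight, and run the minimal-norm argument in $\rL^2(\mathscr M_1^{\oplus n})$ over $\mathscr U(\mathscr A_1)$ to produce a partial isometry $v \in \mathscr V(\mathscr M_1)$ intertwining the two ways $\mathscr A_1$ acts, namely $\Psi^{-1}(\theta_{g_0}(a)) v = v a$ for $a \in \mathscr A_2$ where $\theta_{g_0}$ now denotes the partial isomorphism implementing $gL_2 \mapsto g g_0^{-1} L_2$-type translation. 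Using $\mathscr A_j' \cap \mathscr M_j = \mathscr A_j$ and the ergodicity of $\Lambda \curvearrowright G/L_1$, glue the translates $\sigma_\gamma(v)$ as in the proof of Theorem \ref{main-theorem-weyl} into a unitary $u \in \mathscr U(\mathscr M_1)$ with $\Psi^{-1}(\mathscr A_2) = u \mathscr A_1 u^*$; here one needs that the fixed-point sets $(G/L_2)^{L_1}$ and $(G/L_1)^{L_2}$ are both nonempty, which is automatic once $\Psi$ exists because otherwise $\Phi$ could not be unital with the required image.

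The third step uses essential freeness. Once $\Psi^{-1}(\mathscr A_2) = u \mathscr A_1 u^*$, expand $u = \sum_{\gamma} a_\gamma u_\gamma$ in $\mathscr A_1 \rtimes \Lambda$ and compare atoms of $\beta_b$ as in the final paragraph of the proof of Theorem \ref{main-theorem-weyl}; the essential freeness of $\Lambda \curvearrowright G/L_1$ (which holds by the same Zariski-closure argument as in Lemma \ref{lem-free}, since $\bigcap_{g} g \mathbf L_{i,j} g^{-1} = \mathscr Z(\mathbf G_i)$ as $\mathbf L_{i,j}$ is proper and $\mathbf G_i$ almost $k_i$-simple) forces $u \in \mathscr U(\mathscr A_1)$ up to a single fixed-point translation, i.e.\ $\Psi$ carries $\mathscr A_1$ onto $\mathscr A_2$ via a map implemented by some $g \in G$ sending $L_1$-cosets to $L_2$-cosets. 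Chasing the Cartan subalgebras through $\Psi$ then gives an isomorphism of $\Lambda$-spaces $G/L_1 \cong G/L_2$ which, being $\Lambda$-equivariant hence $G$-equivariant by Theorem \ref{thm-equivariant}, is implemented by an element $g \in G$ with $g L_1 g^{-1} = L_2$. \textbf{The main obstacle} I anticipate is Step one in the asymmetric setting: Theorem \ref{thm-equivariant} as stated describes $\Map_\Gamma(G/L, \Prob^n(V))$ for a $G$-variety $V$, and one must check that $G/L_2$, viewed through the algebraic homogeneous space $(\mathbf G/\mathbf L_2)(k)$, qualifies as such a $V$ with $(G/L_2)^{L_1}$ computing the fixed points correctly — i.e.\ that $L_1$-fixed points of $\Prob(G/L_2)$ are genuinely Dirac masses at $L_1$-fixed points of $G/L_2$ and not diffuse measures; this requires a metric-ergodicity or properness input along the lines of \cite[Theorem 6.6]{BG14} to rule out invariant diffuse measures, exactly as in the proof of Proposition \ref{prop-commutant}.
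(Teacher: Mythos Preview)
Your plan aims at a much stronger conclusion than the theorem actually asserts, and in doing so runs into technical obstacles that the paper simply avoids. The statement only claims the \emph{existence} of a conjugating element $g \in G$; it does not claim that $\Psi(\mathscr A_1) = \mathscr A_2$. The paper's proof uses only your Step~1, and then a short, purely algebraic argument: from the $\Lambda$-equivariant normal ucp map $\rE_2 \circ \Psi|_{\mathscr A_1}$, Proposition~\ref{prop-ucp} and Theorem~\ref{thm-equivariant} force $(G/L_1)^{L_2} \neq \emptyset$, so some $g \in G$ satisfies $g^{-1}L_2 g < L_1$; applying the same to $\Psi^{-1}$ yields $h \in G$ with $h^{-1}L_1 h < L_2$. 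Then $x = gh$ satisfies $x^{-1}L_2 x < L_2$, and since each $\mathbf L_{i,2}$ is Zariski connected with $\mathbf L_{i,2}(k_i)$ Zariski dense, the descending chain condition on algebraic subgroups gives $x^{-1}L_2 x = L_2$, whence $g^{-1}L_2 g = L_1$. No convexity argument, no gluing of partial isometries, no essential freeness is needed.

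Your Steps~2 and~3, besides being unnecessary, face genuine difficulties in this generality. Unlike $(G/H)^H \cong \mathscr W_G$, the fixed-point set $(G/L_2)^{L_1}$ is not known to be finite, so the convexity argument in $\mathscr M^{\oplus n}$ with $n = |\mathscr W_G|$ has no direct analogue. Moreover, a fixed point $g_0 L_2 \in (G/L_2)^{L_1}$ only gives an inclusion $g_0^{-1}L_1 g_0 < L_2$, so the induced map $\theta_{g_0} : \mathscr A_2 \to \mathscr A_1$ is a generally non-surjective $\ast$-homomorphism rather than an isomorphism; your ``partial isomorphism'' and the subsequent intertwining would need substantial reformulation. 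Finally, your stated ``main obstacle'' about ruling out diffuse $L_1$-invariant measures on $G/L_2$ is not an obstacle at all: Theorem~\ref{thm-equivariant} already identifies $\Map_\Gamma(G/L_1, \Prob(V))$ with $\Prob(V^{L_1})$, not with $\Prob(V)^{L_1}$, so the passage to Dirac masses on fixed points is built into the cited result.
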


\begin{proof}
For every $j \in \{1, 2\}$, we may choose a Borel probability measure $\nu_j \in \Prob(G/L_j)$ whose measure class coincides with the unique $G$-invariant measure class on $G/L_j$. Denote by $\rE_j : \rL(\Lambda \curvearrowright G/L_j) \to \rL^\infty(G/L_j)$ the canonical $\Lambda$-equivariant faithful normal conditional expectation. Consider the $\Lambda$-equivariant normal ucp map $\Phi = \rE_2 \circ \Psi|_{\rL^\infty(G/L_1)} : \rL^\infty(G/L_1) \to \rL^\infty(G/L_2)$. By Proposition \ref{prop-ucp}, there exists an essentially unique $\Lambda$-equivariant measurable map $\beta : G/L_2 \to \Prob(G/L_1) : b \mapsto \beta_b$ such that $\bary(\beta_\ast \nu_2) = \nu_2 \circ \Phi \prec \nu_1$ and for $\nu_2$-almost every $b \in G/L_2$ and every $f \in \rC_0(G/L_1)$, we have $\beta_b(f) = \Phi(f)(b)$. By Theorem \ref{thm-equivariant}, we have $(G/L_1)^{L_2} \neq \emptyset$ and so there exists $g \in G$ such that $g^{-1} L_2 g < L_1$. Likewise, by considering the $\Lambda$-equivariant normal ucp map $\rE_1 \circ \Psi^{-1}|_{\rL^\infty(G/L_2)} : \rL^\infty(G/L_2) \to \rL^\infty(G/L_1)$, there exists $h \in G$ such that $h^{-1} L_1 h < L_2$. Set $x = gh \in G$ and observe that $x^{-1} L_2 x < L_2$. Write $x = (x_i)_i \in \prod_{i = 1}^d \mathbf G_i(k_i)$. For every $i \in \{1, \dots, d\}$, since $\mathbf L_{i, 2}(k_i)$ is Zariski dense in $\mathbf L_{i, 2}$ by \cite[Proposition I.2.5.3]{Ma91}, we have $x_i^{-1} \mathbf L_{i, 2} x_i < \mathbf L_{i, 2}$. By the descending chain condition, we have $x_i^{-1} \mathbf L_{i, 2} x_i = \mathbf L_{i, 2}$ and so $x_i^{-1} \mathbf L_{i, 2}(k_i) x_i = \mathbf L_{i, 2}(k_i)$. This further implies that $x^{-1} L_2 x = L_2$ and so $g^{-1} L_2 g = L_1$.
\end{proof}

Let us point out that the assumption that $\Psi|_{\rL(\Lambda)} = \id_{\rL(\Lambda)}$ cannot be dropped in general. Indeed, when $L_1$ and $L_2$ are amenable and unimodular, the group measure space von Neumann algebras $\rL(\Lambda \curvearrowright G/L_1)$ and $\rL(\Lambda \curvearrowright G/L_2)$ are both $\ast$-isomorphic to the unique AFD type ${\rm II_\infty}$ factor.

\begin{rem}
We point out that Theorem \ref{main-theorem-weyl} holds more generally when for every $i \in \{1, \dots, d\}$, $\mathbf H_i = \mathscr Z_{\mathbf G_i}(\mathbf S_i) < \mathbf G_i$ is replaced by a Zariski connected $k_i$-subgroup $\mathbf L_i < \mathbf G_i$ for which the quotient $\mathscr N_{\mathbf G_i}(\mathbf L_i)/\mathbf L_i$ is finite, every coset of $\mathscr N_{\mathbf G_i}(\mathbf L_i)/\mathbf L_i$ is represented by an element rational over $k_i$ and $L = \prod_{i = 1}^{d}\mathbf L_i(k_i)$ is noncompact and unimodular. 
\end{rem}

\bibliographystyle{plain}

\end{document}